\newtheorem{theorem}{Theorem}[section]
\newtheorem{lemma}[theorem]{Lemma}
\newtheorem{prop}[theorem]{Proposition}
\newtheorem{corollary}[theorem]{Corollary}
\newtheorem{conj}[theorem]{Conjecture}
\theoremstyle{definition}
\theoremstyle{remark}
\newtheorem{remark}[theorem]{Remark}
\DeclareMathOperator{\disc}{disc}
\DeclareMathOperator{\Res}{Res}
\newcommand{\CC}{\mathbb{C}}
\newcommand{\QQ}{\mathbb{Q}}
\newcommand{\PP}{\mathbb{P}}
\newcommand{\ZZ}{\mathbb{Z}}
\newcommand{\floor}[1]{\left\lfloor #1 \right\rfloor}
\newcommand{\ignore}[1]{}
\newcommand{\ds}{\displaystyle}
\renewcommand{\epsilon}{\varepsilon}
\title{The structure of the double discriminant}
\author{Theresa C. Anderson, Ufuoma V. Asarhasa, Adam Bertelli, Fabian Gundlach,\\and Evan M. O'Dorney}
\date{}
\begin{document}

\maketitle

\begin{abstract}
  For a polynomial $f(x) = \sum_{i=0}^n a_i x^i$, we study the double discriminant $DD_{n,k} = \disc_{a_k} \disc_x f(x)$.  This object has been well studied in algebraic geometry, but has been brought to recent prominence in number theory by its key role in the proof of the Bhargava--van der Waerden theorem.  We bridge the knowledge gap for this object by proving an explicit factorization: $DD_{n,k}$ is the product of a square, a cube, and possibly a linear monomial.  Our proof is entirely algebraic.  We also investigate other aspects of this factorization.
\end{abstract}

\section{Introduction}
The discriminant of a polynomial is a fundamental object in mathematics, and the \emph{double discriminant} is an essential tool in the recent Bhargava--van der Waerden theorem.  However, previous investigations of this crucial double discriminant have been from an algebro-geometric lens.  We bridge this gap by providing the first explicit factorization of the double discriminant from a number-theoretic perspective.  Given the recent breakthroughs using this object, we envision this factorization and its significance to be useful in other pertinent problems in number theory. 

Recall that the \emph{discriminant} of a polynomial $f(x) = \sum_{i=0}^n a_i x^i$ is a polynomial $D = D_n(a_n,\ldots,a_0) = \disc f \in \ZZ[a_0,\ldots,a_n]$ that vanishes precisely when $f$ has a multiple root. In his groundbreaking work on van der Waerden's conjecture, Bhargava \cite{Bhargava_vdW_short,Bhargava_vdW} makes use of a \emph{double discriminant,} the discriminant $DD_{n,k} = DD_{n,k}(a_n, \ldots, \widehat{a_k}, \ldots a_0)$ of the discriminant $D$ with respect to one of the coefficients $a_k$. Since his proof does not need any properties of $DD_{n,k}$ beyond it being a nonzero polynomial, the double discriminant defined above remains mostly unstudied from a number-theoretic perspective.  However, given this novel, recent use, pertinent unanswered questions about the structure of $DD_{n,k}$ arise.  This work answers several of these questions.

As previously mentioned, while the double discriminant has been studied before from other perspectives, much still remains unknown.  For instance, the book \cite{GKZ}, an oft-cited reference on discriminants, does not deal with it. Previous work abounds from a purely algebraic lens: see the work of Lazard--McCallum \cite{LM09} and the references therein (\cite{Henrici1866,BM09,Han16}, among others). These works study the double discriminant $DD = \disc_y \disc_x f(x,y)$ of a general bivariate form $f(x,y)$. Notably, this double discriminant turns out to have square and cube factors, which also appear in our setting. (This is in contrast to the discriminant $D$, which is well known to be irreducible.) These square and cube factors are important as they correspond to two essentially different ways that (not $f$ itself but) $\disc_x f$ can have multiple roots: if $f$ has two double roots or one triple root (see Theorem \ref{thm:triple_double}). Unfortunately, these square and cube factors from the literature  are described in terms of elimination ideals or Macaulay resultants, which are difficult to work with.  We overcome this difficulty by making this factorization explicit.  

\begin{theorem}
\label{thm:DD_fzn_intro}
  For all $n \geq 2$, $0\leq k \leq n$, there is a factorization in $\ZZ[a_0,\ldots,\widehat{a_k}, \ldots,a_n]$, 
  \begin{equation}  
   DD_{n,k} = c_{n,k} R_0R_nA_{n,k}^3 B_{n,k}^2  
  \end{equation}
  where $c_{n,k} \in \ZZ$ is a constant, $A_{n,k}, B_{n,k}$ can be described explicitly, up to scaling (see Theorems \ref{thm:DDn0_fzn} and \ref{thm:factorization-nonzero-k} below), and
	\[
		R_0 :=
		\begin{cases}
			a_0 &\textnormal{if }k=1,\\
			1 &\textnormal{otherwise},
		\end{cases}
		\qquad\textnormal{and}\qquad
		R_n :=
		\begin{cases}
			a_n &\textnormal{if }k=n-1,\\
			1 &\textnormal{otherwise}.
		\end{cases}
	\]
\end{theorem}
\begin{remark}  
For $n=2$ we have $DD_{2,0} = DD_{2,2} = 1$ and $DD_{2,1} = 16a_0a_2$.
    \end{remark}
Note that when $n=2$, $f$ cannot have a pair of double roots nor a triple root, hence there are no square nor cube factors in the factorization.  When $n=3$ only a triple root is possible, so the square term $B_{3,k}$ disappears, and $DD_{3,k}$ is a cube up to scaling.

Our factorization is not only useful from a number-theoretic perspective, but its proof, avoiding algebraic geometry and excessive computations, also illuminates this key structure.  Our techniques are mostly elementary without being heavily computational or relying on previous work.

Secondly, we also investigate the outlying constant $c_{n,k}$, the GCD of the coefficients of $DD_{n,k}$. As $n$ grows, $c_{n,k}$ tends to become larger but remains smooth (that is, a product of small primes). Specifically:
\begin{theorem}
  We have $2^{n-1} \mid c_{n,0}$, and for $k \geq 1$, we have $2^{n} \mid c_{n,k}$.
\end{theorem}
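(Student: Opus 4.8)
My plan is to reduce the statement to one classical congruence for $D_n$ together with a determinantal formula for $DD_{n,k}$ that keeps the (often highly $2$-divisible) leading coefficient of $D_n$ in $a_k$ from contaminating the $2$-adic bookkeeping. The key input is the Stickelberger-type identity
\[
  D_n = S_n^2 - 4T_n, \qquad S_n,\,T_n \in \ZZ[a_0,\dots,a_n]
\]
with $S_n$ homogeneous of degree $n-1$ and $T_n$ of degree $2n-2$: for the monic polynomial this is Stickelberger's lemma (writing the Vandermonde determinant $\prod_{i<j}(\alpha_i-\alpha_j)=P-N$ with $P,N$ the contributions of the even and odd permutations, the sums $P+N$ and $PN$ are symmetric in the roots, hence lie in $\ZZ[a_0,\dots,a_{n-1}]$, and $D_n|_{a_n=1}=(P+N)^2-4PN$), and the general case follows by homogenization. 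Writing $D_n=\sum_j c_j a_k^{\,j}$ as a polynomial in $a_k$ of degree $d:=d_k:=\deg_{a_k}D_n$, the identity gives $\partial_{a_k}D_n=2r_k$ with $r_k=S_n\,\partial_{a_k}S_n-2\,\partial_{a_k}T_n\in\ZZ[a_0,\dots,a_n]$; in particular every $c_j$ with $j$ odd is even, and $r_k\equiv S_n\,\partial_{a_k}S_n\pmod 2$.

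Next I would set up the determinantal formula. In the $(2d-1)\times(2d-1)$ Sylvester matrix of $D_n$ and $\partial_{a_k}D_n$, the first column equals $c_d$ times the fixed integer vector having a $1$ in the top row and a $d$ in the row carrying the leading coefficient of $\partial_{a_k}D_n$; pulling $c_d$ out of that column, clearing it by a single row operation, and expanding along it presents $DD_{n,k}=\disc_{a_k}D_n$ as $\pm\det N$ for an explicit $(2d-2)\times(2d-2)$ \emph{integer} matrix $N$ (so $\disc_{a_k}D_n$ is visibly a polynomial). Its rows are: $d-2$ shifts of the $a_k$-coefficient vector of $D_n$; one shift of the coefficient vector of $d\,D_n-a_k\,\partial_{a_k}D_n=\sum_{j<d}(d-j)c_j a_k^{\,j}$; and $d-1$ shifts of the coefficient vector of $\partial_{a_k}D_n$. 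The last $d-1$ rows are entrywise even. When $d$ is even the middle row is too, since $(d-j)c_j$ is even for every $j$ (for even $j$ because $d-j$ is, for odd $j$ because $c_j$ is); then $N$ has $d$ entrywise-even rows and $2^d\mid\det N=\pm DD_{n,k}$.

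When $d$ is odd the even rows alone only give $2^{d-1}\mid\det N$, and squeezing out the last factor of $2$ is exactly where the mod-$4$ refinement is needed. Factor a $2$ out of each of the $d-1$ even rows, writing $\det N=2^{d-1}\det N'$ where those rows become shifts of the coefficient vector of $r_k$; it remains to see $\det N'\equiv0\pmod 2$. Reduce mod $2$ and read each row as a polynomial in $a_k$: the $D_n$-rows and the middle row become shifts of $\overline{D_n}=\overline{S_n}^{\,2}$ (here $d$ odd is used, so that $d\,D_n-a_k\,\partial_{a_k}D_n\equiv D_n$), and the remaining $d-1$ rows become shifts of $\overline{r_k}=\overline{S_n}\cdot\partial_{a_k}\overline{S_n}$. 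Hence all $2d-2$ rows of $\overline{N'}$, viewed as polynomials of degree $\le 2d-3$ in $a_k$, are divisible by $\overline{S_n}$, so they span an $\FF_2[a_0,\dots,\widehat{a_k},\dots,a_n]$-module of rank at most $2d-2-\deg_{a_k}\overline{S_n}$; since $2\deg_{a_k}\overline{S_n}=\deg_{a_k}\overline{D_n}\le d$, this rank is $<2d-2$ when $\deg_{a_k}\overline{S_n}\ge1$, and when $\deg_{a_k}\overline{S_n}=0$ the $\overline{r_k}$-rows are outright zero — either way $\det\overline{N'}=0$. Thus $2\mid\det N'$ and $2^d\mid\det N=\pm DD_{n,k}$ in this case as well, so in all cases $2^{d_k}\mid DD_{n,k}$, whence $2^{d_k}\mid c_{n,k}$.

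Finally, $\deg_{a_0}D_n=\deg_{a_n}D_n=n-1$ (the coefficient of $a_0^{\,n-1}$ is $\pm n^n a_n^{\,n-1}$, and higher powers are excluded by comparing total degree and weight), while for $0<k<n$ one has $\deg_{a_k}D_n\ge n$, since the classical formula for the discriminant of the trinomial $a_nx^n+a_kx^k+a_0$ shows that $D_n$ contains the monomial $\pm a_k^{\,n}a_0^{\,k-1}a_n^{\,n-1-k}$ with nonzero coefficient. Combining, $2^{n-1}\mid c_{n,0}$ and $2^{n}\mid c_{n,k}$ for $1\le k\le n-1$, as claimed (the case $n=2$, $k=0$, where $D_2$ is linear in $a_0$, being a degenerate exception). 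The two places I expect difficulty are (i) carrying out the passage to the honest determinant $\det N$ cleanly — one must avoid working with $\Res_{a_k}(D_n,\partial_{a_k}D_n)=\pm(\mathrm{lc}_{a_k}D_n)\,DD_{n,k}$, whose $2$-content is swamped by that of $\mathrm{lc}_{a_0}D_n=\pm n^n a_n^{\,n-1}$ when $n$ is even — and (ii) the odd case, where the bare mod-$2$ fact ``$D_n$ is a square'' does \emph{not} suffice and one genuinely needs $D_n=S_n^2-4T_n$ so that $\overline{r_k}$ is divisible by $\overline{S_n}$.
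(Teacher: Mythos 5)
The core identity you use—the Stickelberger/Pfaffian decomposition $D_n = S_n^2 - 4T_n$ making $\partial_{a_k}D_n$ entrywise even—is exactly the paper's key input. But you push it through a genuinely different (and more careful) route than the paper does, and your extra care is, I believe, actually needed. The paper writes $\pm DD_{n,k}=\Res_{a_k}(D_n,\partial_{a_k}D_n)$ and then pulls $2^{\deg_{a_k}D_n}$ out of the resultant. This drops the leading-coefficient factor: in reality $\Res_{a_k}(D_n,\partial_{a_k}D_n)=\pm\,\mathrm{lc}_{a_k}(D_n)\cdot DD_{n,k}$, and $\mathrm{lc}_{a_0}(D_n)=\pm n^n a_n^{n-1}$ has content $n^n$, which swallows $2^{n-1}$ whenever $n$ is even. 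So the paper's argument, read literally, shows $2^{\deg_{a_k}D_n}\mid \mathrm{cont}(\mathrm{lc}_{a_k}D_n)\cdot c_{n,k}$, which is vacuous exactly when the leading coefficient has $2$-content. You correctly identify this pitfall and route around it by producing an honest integer $(2d-2)\times(2d-2)$ determinant $N$ with $\det N=\pm DD_{n,k}$, rather than working with the Sylvester determinant and dividing by $\mathrm{lc}$ afterward.

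The price you pay for being honest about $\mathrm{lc}$ is that only $d-1$ rows of $N$ (the $\partial_{a_k}D_n$ block) are entrywise even a priori, which gives only $2^{d-1}$, forcing your parity split on $d$. Your two cases are both correct: for $d$ even the middle row $\pm\sum_j(d-j)c_j a_k^{\,j}$ is entrywise even (using $c_j$ even for odd $j$, which follows from $\overline{D_n}=\overline{S_n}^{\,2}$ having only even-degree terms over $\FF_2$), giving $2^d$; for $d$ odd, the mod-$2$ reduction $\overline{N'}$ has all rows divisible (as polynomials in $a_k$) by $\overline{S_n}$, and the degree count $2\deg_{a_k}\overline{S_n}=\deg_{a_k}\overline{D_n}\le d$ gives the rank drop (or, if $\deg_{a_k}\overline{S_n}=0$, the $\overline{r_k}$-rows vanish outright). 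Both subcases legitimately yield $\det\overline{N'}=0$ over $\FF_2(a_0,\dots,\widehat{a_k},\dots,a_n)$, hence $2\mid\det N'$ and $2^d\mid\det N$. This is a genuine strengthening of the paper's two-line argument: you need the full strength of $D_n\equiv S_n^2\pmod 4$ (not merely ``$D_n$ is a square mod $2$'') to make the odd-$d$ case go through. Net: same Stickelberger engine, but a more robust determinantal framing that actually closes the gap the paper leaves open.

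One small point worth flagging: you cite the ``classical trinomial discriminant formula'' to get $\deg_{a_k}D_n=n$ for $0<k<n$; the paper instead proves this directly via the Sylvester matrix (its Proposition on leading terms) with a $p$-adic valuation argument to show the coefficient of $a_k^n$ is nonzero. Either works, but if you want to avoid invoking the trinomial formula as a black box, the paper's argument is the cleaner self-contained route.
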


\begin{conj}[see Conjecture \ref{conj:c_full} below for a more detailed conjecture]
  The primes dividing $c_{n,k}$, for $1 \leq k \leq n-1$, are precisely those dividing $2\gcd(n,k)$.
\end{conj}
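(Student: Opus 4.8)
We separate the assertion into two implications: (i) every prime dividing $2\gcd(n,k)$ divides $c_{n,k}$, and (ii) no other prime does. Since $c_{n,k}$ is by definition the content of $DD_{n,k}$, we have $p\mid c_{n,k}$ exactly when $DD_{n,k}$ reduces to $0$ in $\FF_p[a_i:i\neq k]$; and — up to the caveat that the leading coefficient of $D_n$ in $a_k$ may degenerate modulo $p$ — this happens exactly when, after specializing the $a_i$ $(i\neq k)$ to sufficiently generic elements of $\overline{\FF_p}$, the one-variable polynomial $D_n\in\overline{\FF_p}[a_k]$ fails to be separable of its generic degree. The workhorse will be the logarithmic-derivative identity
\[
  \frac{\partial D_n}{\partial a_k} \;=\; 2\,D_n\!\!\sum_{\phi\,:\,f'(\phi)=0}\frac{\phi^k}{f(\phi)}\qquad(0\le k<n),
\]
which one proves by differentiating $\disc_x f=a_n^{2n-2}\prod_{i<j}(\theta_i-\theta_j)^2$ through the implicit relation $f(\theta_i)=0$ and then converting $\sum_i\theta_i^kf''(\theta_i)/f'(\theta_i)^2$ into a sum of residues at the roots $\phi$ of $f'$ (valid since $k\le n-1$ kills the residue at infinity, and $f'$ is generically separable). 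Because $DD_{n,k}=\pm\Res_{a_k}(D_n,\partial_{a_k}D_n)$ divided by the leading coefficient of $D_n$ in $a_k$, the condition $p\mid c_{n,k}$ forces either (a) $D_n$ and $\partial_{a_k}D_n$ to share a common factor of positive $a_k$-degree over $\overline{\FF_p}(a_i:i\neq k)$, or (b) the $a_k$-degree of $D_n$ to drop modulo $p$ in a way that formally annihilates the discriminant — this is what happens at $p=2$ already for $(n,k)=(3,1)$.

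For implication (ii) it is equivalent to show: if $p$ is an odd prime with $p\nmid n$ or $p\nmid k$, then $p\nmid c_{n,k}$. The plan is to produce, for each such $p$, an explicit specialization of the $a_i$ $(i\neq k)$ over $\overline{\FF_p}$ at which $D_n$ has its generic $a_k$-degree (so the leading $a_k$-coefficient does not degenerate) and is separable in $a_k$; this immediately gives $DD_{n,k}\not\equiv0\pmod p$. Natural test families are the trinomials $f=x^n+a_kx^k+c$ with $c\in\overline{\FF_p}^{\times}$, whose discriminant in $a_k$ is classically known in closed form, and polynomials engineered to have only simple critical points, for which the displayed identity reduces separability of $D_n(a_k)$ to checking that $\sum_{\phi}\phi^k/f(\phi)$ is nonzero at every value of $a_k$ producing a multiple root of $f$. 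The subcase $p\mid n$, $p\nmid k$ requires its own specialization, since there $\deg f'$ drops modulo $p$: one arranges $f'$ separable of degree $n-2$ and re-runs the count with the identity adapted to $\disc_x f\equiv\pm\Res_x(f,f')$ of the reduced degree.

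For implication (i), the prime $2$ always divides $c_{n,k}$ by the theorem stated above ($2^n\mid c_{n,k}$ for $k\ge1$), so it remains to treat an odd prime $p\mid\gcd(n,k)$ and show $D_n\bmod p$ is non-separable in $a_k$. The guiding model is $p=2$, where $D_n\bmod2$ is a perfect square in $\FF_2[a_0,\dots,a_n]$ (a classical fact, reflecting the symmetry of the Vandermonde modulo $2$); there is no such global $p$-th power for odd $p$, so the vanishing must be a genuinely directional phenomenon in $a_k$ using both $p\mid k$ and $p\mid n$. The intended route: since $p\mid k$, the part of $f$ supported on exponents divisible by $p$ — which contains both $a_kx^k$ and $a_nx^n$ — equals $\gamma(x)^p$ for a polynomial $\gamma$ of degree $n/p$ over the perfect field $\overline{\FF_p}(a_i)$, so $f=\gamma^p+h$ with $f'=h'$; since $p\mid n$ we have $p\nmid n-1$, hence $h'$ has degree exactly $n-2$ generically and $D_n\bmod p=\pm\bigl((n-1)a_{n-1}\bigr)^n\prod_{h'(\phi)=0}\bigl(\gamma(\phi)^p+h(\phi)\bigr)$. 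One then wants to show that this product, whose entire $a_k$-dependence lies inside $\gamma^p$, has a repeated root in $a_k$ — via mechanism (a), or because it is visibly a polynomial in $a_k^p$, or via a degeneration of its leading $a_k$-coefficient as in (b). Which of these actually occurs is something I would first pin down experimentally, by factoring $D_n$, $\partial_{a_k}D_n$, and $DD_{n,k}$ over $\FF_p$ for the smallest cases $(n,k,p)=(6,3,3),(9,3,3),(10,5,5),\dots$, and in particular by locating where $p\mid n$ (and not merely $p\mid k$) is genuinely needed.

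The main obstacle is precisely this last step: identifying and then proving the reduction-modulo-$p$ mechanism that makes an odd prime dividing $\gcd(n,k)$ behave like $p=2$. Everything else — the specializations of implication (ii), the residue bookkeeping behind the derivative identity, the passage through $f=\gamma^p+h$ — I expect to be routine once that mechanism is understood. Upgrading the conclusion from ``$DD_{n,k}\equiv0\pmod p$'' to the exact power of $p$ dividing $c_{n,k}$ predicted in Conjecture~\ref{conj:c_full} is harder still, and I would attempt it only afterwards, by computing the order of vanishing of $\disc_{a_k}$ along the relevant ``$p$-th-power'' locus — just as $2^n\mid c_{n,k}$ should come from the order of vanishing of $\disc_{a_k}$ along the locus of squares.
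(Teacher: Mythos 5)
The statement you are addressing is presented in the paper as a \emph{conjecture}, not a theorem: the paper offers only partial progress, namely Proposition~\ref{prop:2^n}, which establishes $2^{n-1}\mid c_{n,k}$ (and $2^n\mid c_{n,k}$ when $1\le k\le n/2$) via Stickelberger's theorem and the discriminant Pfaffian of \cite{ABV23}. There is therefore no ``paper's own proof'' to compare against, and any assessment of your proposal has to be on its own terms.

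As a proof attempt, the proposal falls short of completeness in exactly the places you yourself flag. For implication (ii) you propose specializations to trinomials and to polynomials with simple critical points, but no specialization is actually exhibited, no discriminant is computed in closed form, and no separability is verified; the subcase $p\mid n$, $p\nmid k$ (where $\deg f'$ drops mod $p$) is acknowledged to need a separate degree argument that is not carried out. For implication (i), the only proved ingredient is the paper's own Proposition~\ref{prop:2^n} for $p=2$, which you cite rather than re-derive; for odd $p\mid\gcd(n,k)$ you identify the decomposition $f=\gamma^p+h$ as the likely mechanism, but you explicitly leave open which of your three candidate mechanisms actually forces $DD_{n,k}\equiv 0\pmod p$, deferring to computer experiment. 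That deferred step is the heart of the conjecture and cannot be left as a plan. A small technical point worth fixing: your logarithmic-derivative identity carries an extraneous factor of $2$. Differentiating the paper's \eqref{eq:D_from_s_i}, namely $D_n=(-1)^{n(n-1)/2}n^n a_n^{n-1}\prod_i f(\phi_i)$ with $\phi_i$ the roots of $f'$, and using $f'(\phi_i)=0$ gives $\frac{\partial}{\partial a_k}D_n=D_n\sum_{\phi:f'(\phi)=0}\phi^k/f(\phi)$ for $0\le k<n$, with no factor of $2$; the case $n=2$, $f=x^2+bx+c$ confirms this.
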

These primes may appear to high powers; see Table \ref{tab:c} for numerical data.

This paper is organized as follows.  Section \ref{section:properties} gives relevant information on discriminants, resultants and double discriminants.  This is followed by Section \ref{section:factorization}, where our main result, Theorem \ref{thm:DD_fzn_intro}, is carefully proven via several steps. In Section \ref{sec:signif_AB}, we relate the factors $A$ and $B$ in Theorem \ref{thm:DD_fzn_intro} to triple and double roots of $f$. Finally, we discuss the constants appearing in our factorization in Section \ref{section:constant}.

\subsection{Acknowledgments}
TCA was supported by the NSF under DMS-2231990 and CAREER DMS-2237937.
FG was supported by the Deutsche Forschungsgemeinschaft (DFG, German Research Foundation) --- Project-ID 491392403 --- TRR 358 (Project A4).
The authors thank Hongyi (Brian) Hu for helpful discussions.

\section{Key properties of discriminants, resultants, and double discriminants}
\label{section:properties}
\subsection{Discriminants and resultants}
The discriminant $D = D_n \in \ZZ[a_0,\ldots,a_n]$ of a polynomial $f(x)$ and the resultant $\Res(f(x), g(x))$ of two polynomials $f(x)$ and $g(x)$ are well studied. There are two main ways to define them. The first is to stipulate that the discriminant of a completely factored polynomial
\[
  f(x) = a_n(x - r_1) \cdots (x - r_n)
\]
is
\begin{equation} \label{eq:disc_from_roots}
  D = a_{n}^{2n-2} \prod_{1 \leq i < j \leq n} (r_i - r_j)^2
\end{equation}
and the resultant of $f(x)$ and a polynomial $g(x)$ of degree $m$ is
\begin{equation}\label{eq:res_from_roots}
	\Res(f(x), g(x)) = a_n^m \prod_{i=1}^n g(r_i).
\end{equation}
They are related by
\begin{equation}\label{eq:disc_from_res}
	D = \frac{(-1)^{n(n-1)/2}}{a_n} \Res(f(x), f'(x)).
\end{equation}

They can also be defined by a Sylvester determinant, here shown only for the discriminant:
\begin{equation} \label{eq:Sylv}
  D = \frac{(-1)^{n(n-1)/2}}{a_n} \begin{vmatrix}
      a_{n} & a_{n-1} & \cdots & a_2 & a_1 & a_0 & 0 & \cdots & 0 \\
      0 & a_n & \cdots & a_3 & a_2 & a_1 & a_0 & \ddots & 0  \\
      \vdots & \ddots & \ddots & \ddots& \ddots& \ddots& \ddots & \ddots & 0  \\
      0 & \cdots & 0 & a_n & a_{n-1} & a_{n-2} & a_{n-3} & \cdots & a_0  \\
      n a_n & (n-1)a_{n-1} & \cdots & 2a_2 & a_1 & 0 & 0 & \cdots & 0 \\
      0 & n a_n & \cdots & 3 a_3 & 2a_2 & a_1 & 0 & \cdots & 0 \\
      \vdots & \ddots & \ddots & \ddots& \ddots& \ddots & \ddots & \ddots & \vdots  \\
      0 & 0 & \cdots & na_n & \cdots & \cdots & 2a_2 & a_1 & 0  \\
      0 & 0 & \cdots & 0 & na_n & \cdots & \cdots & 2a_2 & a_1  \\
  \end{vmatrix}
\end{equation}
Both definitions are equivalent \cite[Ch.\ 12, (1.23)]{GKZ}. (Note that \cite{GKZ} uses a definition of discriminant differing by a sign from the modern one used here.) Here are a few well-known properties of use to us:

\begin{prop}[\cite{GKZ}, Ch.\ 12, (1.24)]
   \label{prop:D_bihom}
  The discriminant $D_n$ is homogeneous and quasi-homogeneous in the coefficients; that is, each term 
  \[
    c_{e_{0},e_1,\ldots,e_n} a_0^{e_0}a_1^{e_1}\cdots a_n^{e_n}
  \]
  satisfies
  \[
    \sum_i e_i = 2n - 2, \quad \sum_i ie_i = n(n-1).
  \]
\end{prop}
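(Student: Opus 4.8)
The plan is to deduce both statements from the product formula \eqref{eq:disc_from_roots} by two one-parameter scaling arguments. First recall the elementary fact that a polynomial $P \in \ZZ[a_0,\ldots,a_n]$ is a $\ZZ$-linear combination of monomials $a_0^{e_0}\cdots a_n^{e_n}$ with $\sum_i e_i = d$ if and only if $P(ta_0,\ldots,ta_n) = t^{d}P(a_0,\ldots,a_n)$ holds in $\ZZ[t,a_0,\ldots,a_n]$, and is a combination of monomials with $\sum_i i e_i = w$ if and only if $P(a_0, \lambda a_1, \lambda^2 a_2, \ldots, \lambda^n a_n) = \lambda^{w}P(a_0,\ldots,a_n)$ holds in $\ZZ[\lambda, a_0, \ldots, a_n]$ (in each case decompose $P$ into its homogeneous — resp.\ its weighted-homogeneous, for the weighting assigning $a_i$ weight $i$ — components and compare coefficients of the auxiliary variable). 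So it is enough to check these two identities for $P = D_n$, with $d = 2n-2$ and $w = n(n-1)$; the proposition then follows, since every term of $D_n$ must satisfy both constraints at once.

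To verify them I would pass to the ``universal'' polynomial ring on the leading coefficient and the roots. Let $a_n, r_1, \ldots, r_n$ be independent indeterminates and let $\iota \colon \ZZ[a_0,\ldots,a_n] \to \ZZ[a_n, r_1,\ldots,r_n]$ be the ring homomorphism fixing $a_n$ and sending $a_i \mapsto (-1)^{n-i} a_n\, e_{n-i}(r_1,\ldots,r_n)$ for $i < n$, i.e.\ $a_i$ goes to the $x^i$-coefficient of $a_n\prod_{j}(x-r_j)$. Since $a_n$ together with $e_1,\ldots,e_n$ are algebraically independent over $\ZZ$ (fundamental theorem of symmetric polynomials), $\iota$ is injective, and the equivalence of the two definitions of the discriminant \cite[Ch.\ 12, (1.23)]{GKZ} is precisely the statement $\iota(D_n) = a_n^{2n-2}\prod_{i<j}(r_i-r_j)^2$. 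Now the substitution $a_i \mapsto t a_i$ on the source is intertwined by $\iota$ with the substitution $a_n \mapsto t a_n$, $r_j \mapsto r_j$ on the target, under which $a_n^{2n-2}\prod_{i<j}(r_i-r_j)^2$ picks up the factor $t^{2n-2}$; and, after adjoining $\lambda^{\pm1}$ to both rings, the substitution $a_i \mapsto \lambda^i a_i$ is intertwined with $a_n \mapsto \lambda^n a_n$, $r_j \mapsto \lambda^{-1}r_j$ (one checks $(-1)^{n-i}\lambda^n a_n\, e_{n-i}(\lambda^{-1}r_1,\ldots,\lambda^{-1}r_n) = \lambda^{i}\iota(a_i)$), under which $a_n^{2n-2}\prod_{i<j}(r_i-r_j)^2$ picks up the factor $\lambda^{n(2n-2)}\lambda^{-2\binom{n}{2}} = \lambda^{n(n-1)}$. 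Applying $\iota$ and invoking injectivity yields the two desired identities, hence the proposition.

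I do not anticipate a real obstacle: the result is classical, and the only points needing care are the injectivity of $\iota$ and the bookkeeping of the exponent $2\binom{n}{2} = n(n-1)$ from the Vandermonde-type product. As an alternative, homogeneity of degree $2n-2$ can be read straight off the Sylvester determinant \eqref{eq:Sylv}: that $(2n-1)\times(2n-1)$ matrix has each of its $2n-1$ rows homogeneous of degree $1$ in the $a_i$ (the first $n-1$ from $f$, the last $n$ from $f'$), so the determinant is homogeneous of degree $2n-1$, and dividing by $a_n$ leaves degree $2n-2$; quasi-homogeneity could likewise be extracted by assigning $a_i$ weight $i$ and tracking the weights of rows and columns, but the root-scaling argument above is cleaner and settles both claims simultaneously.
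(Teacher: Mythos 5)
The paper does not prove this proposition; it simply cites \cite[Ch.\ 12, (1.24)]{GKZ} for the statement. Your proposed proof is nonetheless correct and is essentially the standard textbook argument (which is likely what GKZ does as well). The passage to $\ZZ[a_n, r_1,\ldots,r_n]$ via the coefficient-to-root map $\iota$ is set up carefully, the algebraic independence of $a_n, a_ne_1,\ldots,a_ne_n$ (hence injectivity of $\iota$) is correctly justified, and the two intertwining checks are right: $a_i \mapsto ta_i$ on the source corresponds to $a_n \mapsto ta_n$ with roots fixed, giving $t^{2n-2}$; and $a_i \mapsto \lambda^i a_i$ corresponds to $a_n \mapsto \lambda^n a_n$, $r_j \mapsto \lambda^{-1}r_j$, giving $\lambda^{n(2n-2)-2\binom{n}{2}} = \lambda^{n(n-1)}$. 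Your side remark about reading degree $2n-2$ off the Sylvester determinant also checks out ($2n-1$ rows each of homogeneous degree $1$, divided by $a_n$). One could equally derive the quasi-homogeneity from the Sylvester matrix by assigning each entry $a_j$ (or $ja_j$) weight $j$ and summing weights along a permutation of the $(2n-1)\times(2n-1)$ matrix, but your root-scaling argument is cleaner and handles both gradings at once, so I see no reason to prefer that route. No gaps.
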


\begin{prop}[a special case of \cite{GKZ}, Ch.\ 12, (1.26)] \label{prop:D_sym}
  We have the symmetry
  \[
    D(a_0,\ldots, a_n) = D(a_n,\ldots, a_0).
  \]
\end{prop}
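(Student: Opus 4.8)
The plan is to deduce the identity from the root formula \eqref{eq:disc_from_roots} via the classical observation that reversing the coefficient sequence of $f$ corresponds to the substitution $x \mapsto 1/x$. Since both sides of the asserted equality lie in $\ZZ[a_0,\ldots,a_n]$, it suffices to check it after base change to $\CC$, and hence on any Zariski-dense subset of $\AA^{n+1}$. I would work on the dense open locus $\{a_0 a_n \neq 0\}$, using the "universal splitting" model with coordinates $a_n, r_1, \ldots, r_n$, in which $a_i = (-1)^{n-i} a_n e_{n-i}(r_1,\ldots,r_n)$ and \eqref{eq:disc_from_roots} reads $D_n(a_0,\ldots,a_n) = a_n^{2n-2}\prod_{i<j}(r_i - r_j)^2$. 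On this locus every $r_i$ is nonzero, since $a_0 = a_n(-1)^n\prod_i r_i$.

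Next I would identify the reversed discriminant with the discriminant of the reciprocal polynomial. By definition $D_n(a_n,\ldots,a_0)$ is obtained by the substitution putting $a_{n-i}$ in the degree-$i$ slot, i.e.\ it is the discriminant of $g(x) = \sum_{i=0}^n a_{n-i}x^i = x^n f(1/x)$. A short computation gives $g(x) = a_0\prod_{i=1}^n (x - 1/r_i)$, so $g$ has leading coefficient $a_0$ and roots $1/r_1,\ldots,1/r_n$, and \eqref{eq:disc_from_roots} yields
\[
  D_n(a_n,\ldots,a_0) \;=\; a_0^{2n-2}\prod_{i<j}\left(\frac{1}{r_i}-\frac{1}{r_j}\right)^2 \;=\; a_0^{2n-2}\prod_{i<j}\frac{(r_i-r_j)^2}{r_i^2 r_j^2}.
\]

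Then the identity falls out of bookkeeping: each $r_k$ occurs in exactly $n-1$ of the pairs $\{i,j\}$, so $\prod_{i<j} r_i^2 r_j^2 = \big(\prod_i r_i\big)^{2(n-1)} = (a_0/a_n)^{2n-2}$ (the sign $(-1)^n$ in $\prod r_i$ disappears under the even power). Hence
\[
  D_n(a_n,\ldots,a_0) = a_0^{2n-2}\cdot (a_n/a_0)^{2n-2}\cdot\prod_{i<j}(r_i-r_j)^2 = a_n^{2n-2}\prod_{i<j}(r_i-r_j)^2 = D_n(a_0,\ldots,a_n),
\]
and the polynomial identity follows by density. There is no serious obstacle here: this is a routine classical computation, and the only points requiring care are (i) the reduction to the dense splitting locus $\{a_0a_n\neq 0\}$ and (ii) checking that the sign of $\prod r_i$ and all odd powers of the $r_i$ wash out because every relevant quantity enters squared. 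One could instead argue directly from the Sylvester form \eqref{eq:Sylv}, comparing $\disc f = \Res(f,f')/a_n$ with the analogous expression for $g = x^n f(1/x)$, but tracking the signs and powers of $a_0$ there is messier than the root computation above.
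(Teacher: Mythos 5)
The paper offers no proof of this proposition: it is quoted directly as a special case of \cite[Ch.\ 12, (1.26)]{GKZ}. Your argument is therefore an independent, self-contained proof, and it is correct. The route you take---identifying $D_n(a_n,\ldots,a_0)$ with $\disc g$ for the reciprocal polynomial $g(x) = x^n f(1/x)$, then computing via the root-product formula \eqref{eq:disc_from_roots} on the dense locus $\{a_0 a_n \neq 0\}$---is the standard classical derivation of this symmetry, and all the bookkeeping checks out: $g(x) = a_0\prod_i(x - 1/r_i)$ because $a_0 = (-1)^n a_n \prod_i r_i$, each $r_k$ occurs in $n-1$ of the pairs so $\prod_{i<j} r_i^2 r_j^2 = (\prod_i r_i)^{2n-2} = (a_0/a_n)^{2n-2}$, and the powers of $a_0$ cancel exactly. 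The density/Zariski-closure reduction at the start is the right way to justify passing to the splitting model, and you correctly observe that signs and odd powers of the $r_i$ are irrelevant since everything enters squared. The only difference from the paper is that the paper delegates to \cite{GKZ} while you supply the proof; the argument you give buys self-containment at essentially no cost and also makes visible \emph{why} the symmetry holds (the reciprocal substitution $x\mapsto 1/x$), which is useful context for Lemma \ref{lem:sym}.
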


\begin{prop}[\cite{GKZ}, Ch.\ 12, (1.4) and (1.5)]\label{prop:res-properties}~
	\begin{enumerate}[(a)]
	\item\label{item:res-properties-swap}
		$\Res(f(x), g(x)) = (-1)^{nm} \Res(g(x), f(x))$ if $f$ and $g$ have degrees~$n$ and~$m$, respectively.
	\item\label{item:res-properties-mult}
		$\Res(f_1(x) f_2(x), g(x)) = \Res(f_1(x), g(x)) \Res(f_2(x), g(x))$.
    \item\label{item:disc-mult}
        $\disc(f \cdot g)=\disc(f)\cdot\Res(f,g)^2\cdot \disc(g)$.
	\item\label{item:res-properties-x}
		$\Res(x,g(x)) = g(0)$.
	\end{enumerate}
\end{prop}

\begin{prop} \label{prop:ldg}
  If we view $D$ as a polynomial in exactly one coefficient $a_k$, then
  \[\deg_{a_k}(D)=\begin{cases}
    n-1, & k \in \{0, n\}\\
    n, &  0 < k < n.
  \end{cases}\]
  Moreover, the leading term of $D$ is   \[
    \begin{cases}
        \ds (-1)^{\frac{n(n-1)}{2}} n^n a_0^{n-1} a_n^{n-1}, & k \in \{0, n\}\\
        \ds (-1)^{\frac{n(n-1)}{2}-k(n-k)} \frac{(ka_0)^k ((n-k)a_n)^{n-k}}{a_0 a_n}\cdot a_k^n, & 0 < k < n.
    \end{cases}
  \]
\end{prop}
This is presumably well-known, but see Lemmas \ref{lem:disc-zero} and \ref{lem:disc-nonzero-k} for the computations.

\subsection{Double discriminants}
\begin{lemma} \label{lem:deg} For $0 \leq k \leq n$, the double discriminant
    $DD_{n,k}$ is a homogeneous and quasi-homogeneous polynomial in $\{a_0,\ldots,a_n\}\setminus\{a_k\}$; specifically, each term is of the form
    \[
      c_{e_0,\ldots, \widehat{e_k}, \ldots, e_n} \prod_{\substack{0 \leq i \leq n \\ i \neq k}} a_i^{e_i}
    \]
    with
    \[
      \sum_i e_i = \begin{cases}
        (3n-6)(n-1), & k \in \{0, n\}\\
        (3n-4)(n-1), & 0 < k < n
      \end{cases}
    \]
    and
    \[
      \sum_i i e_i = \begin{cases}
        n(n-1)(2n - 4), & k = 0 \\
        n(n-1)(2n - k - 2), & 0 < k \leq n.
      \end{cases}
    \]
\end{lemma}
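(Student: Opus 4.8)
The plan is to express $DD_{n,k}$ as one discriminant polynomial substituted into another and then to push the two gradings of the discriminant through that substitution. First I would regard $D_n$ as a polynomial in the single variable $a_k$ and write $D_n = \sum_{j=0}^{d} b_j\, a_k^{\,j}$, with $d = \deg_{a_k} D_n$ and $b_j \in \ZZ[\{a_i : i \neq k\}]$. By Proposition~\ref{prop:ldg} we have $d = n-1$ when $k \in \{0,n\}$ and $d = n$ when $0 < k < n$, and in every case the leading coefficient $b_d$ is nonzero (indeed it is a known monomial). Since the discriminant is defined by a polynomial identity in the coefficients (e.g.\ through the Sylvester resultant \eqref{eq:Sylv}), this yields $DD_{n,k} = \disc_{a_k} D_n = D_d(b_0, b_1, \ldots, b_d)$, where $D_d = D_d(c_0,\ldots,c_d)$ is the universal discriminant of a form of degree $d$.

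Next I would record the gradings of the two ingredients, both being instances of Proposition~\ref{prop:D_bihom}. Applied to $D_n$ it says that $D_n$ is homogeneous of degree $2n-2$ and quasi-homogeneous of weighted degree $n(n-1)$ when $a_i$ is assigned weight $i$; extracting the coefficient of $a_k^{\,j}$ then shows each $b_j$ is homogeneous of degree $2n-2-j$ and quasi-homogeneous of weighted degree $n(n-1)-jk$ in the variables $\{a_i : i \neq k\}$. Applied to $D_d$ it says that every monomial $\prod_{j} c_j^{\,m_j}$ of $D_d$ satisfies $\sum_j m_j = 2d-2$ and $\sum_j j\,m_j = d(d-1)$.

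Then I would combine the two: substituting $c_j = b_j$ turns such a monomial into $\prod_j b_j^{\,m_j}$, which is homogeneous of degree $\sum_j m_j(2n-2-j) = (2n-2)(2d-2) - d(d-1)$ and quasi-homogeneous of weighted degree $\sum_j m_j\big(n(n-1)-jk\big) = n(n-1)(2d-2) - k\,d(d-1)$. The crucial observation is that these two numbers depend only on $n$, $k$, and $d$, not on the chosen monomial, because $\sum_j m_j$ and $\sum_j j\,m_j$ are constant across the monomials of $D_d$; hence $DD_{n,k} = D_d(b_0,\ldots,b_d)$ is homogeneous and quasi-homogeneous with exactly those two degrees. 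Finally I would substitute $d = n-1$ (for $k \in \{0,n\}$) or $d = n$ (for $0 < k < n$) and simplify: for instance $(2n-2)(2(n-1)-2) - (n-1)(n-2) = 3(n-1)(n-2)$ and $n(n-1)(2(n-1)-2) = n(n-1)(2n-4)$ reproduce the displayed formulas in the boundary cases, with the case $k=n$ matching the stated expression for $0 < k \le n$ after one puts $d = n-1$.

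The argument is essentially bookkeeping, so I do not expect a serious obstacle; the two points that genuinely need care are (i) using the correct value $d = \deg_{a_k} D_n$, in particular the drop to $n-1$ at the endpoints $k \in \{0,n\}$ supplied by Proposition~\ref{prop:ldg} (taking $d = n$ throughout would give the wrong degrees for $DD_{n,0}$ and $DD_{n,n}$), and (ii) the nonvanishing of $b_d$, again from Proposition~\ref{prop:ldg}, which is what legitimizes writing $\disc_{a_k} D_n = D_d(b_0,\ldots,b_d)$ rather than a discriminant of lower degree. I would also remark that $DD_{n,k}$ is a genuinely nonzero polynomial — otherwise the statement is vacuous — but this is logically separate from the degree count and is where the combinatorial analysis of the Sylvester matrix (or Bhargava's original argument) is needed.
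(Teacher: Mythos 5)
Your argument is correct and follows the same route as the paper: write $D_n$ as a polynomial in $a_k$, observe that the coefficients $b_j$ are homogeneous of degree $2n-2-j$ and quasi-homogeneous of weighted degree $n(n-1)-jk$, and then push both gradings through the outer discriminant $D_d$ using Proposition~\ref{prop:D_bihom} together with the degree $d=\deg_{a_k}D_n$ from Proposition~\ref{prop:ldg}. The only difference is cosmetic: you carry out the boundary cases $k\in\{0,n\}$ explicitly by keeping $d$ as a parameter, whereas the paper works out $0<k<n$ and declares the endpoints ``analogous.''
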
 
\begin{proof}

For brevity, we treat only the case $0 < k < n$, the others being analogous.
We have
\[D=a_k^nc_n+a_k^{n-1}c_{n-1}+\cdots+c_0\]
for polynomials $c_j(a_0,\ldots,a_{k-1},a_{k+1},\ldots,a_n)$ of homogeneous degree $2n-2-j$ and quasi-homogeneous degree $n(n-1) - jk$. A generic term in $DD_{n,k}$ is of the form $\prod_{j=0}^n c_j^{d_j}$, where $\sum_{j=0}^n d_j=2n-2$ and $\sum_{j=0}^n jd_j=n(n-1)$. Thus, the homogeneous degree in all the $a_k$'s of each term of $DD_{n,k}$ is
\begin{align*}
  \deg DD_{n,k} &= \sum_{j=0}^{n}(2n-2-j)d_j \\
  &= (2n-2)\sum_{j=0}^{n} d_j-\sum_{j=0}^{n-1}jd_j \\
  &= (2n-2)(2n-2)-n(n-1) = (3n-4)(n-1),
\end{align*}
and the quasi-homogeneous degree is
\begin{align*}
  \deg_{\text{quasi-hom}} DD_{n,k} &= \sum_{j=0}^{n}(n(n-1) - jk)d_j \\
  &= n(n-1)\sum_{j=0}^{n} d_j-k\sum_{j=0}^{n}jd_j \\
  &= n(n-1)(2n-2)-n(n-1)k = n(n-1)(2n - k - 2),
\end{align*}
as claimed.
\end{proof}

\begin{lemma} \label{lem:sym}
    $DD_{n,n-k}(a_0, \ldots, \widehat{a_k}, \ldots, a_n) = 
  DD_{n,k}(a_n, \ldots, \widehat{a_k}, \ldots a_0)$.
\end{lemma}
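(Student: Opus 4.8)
The plan is to derive everything from the known symmetry of the (single) discriminant, Proposition~\ref{prop:D_sym}, which says $D_n(a_0,\ldots,a_n) = D_n(a_n,\ldots,a_0)$. Write $D = D_n(a_0,\ldots,a_n)$ and let $\sigma$ denote the involution of the coefficient ring $\ZZ[a_0,\ldots,a_n]$ that swaps $a_i \leftrightarrow a_{n-i}$ for every $i$. Proposition~\ref{prop:D_sym} says $\sigma(D) = D$. The double discriminant $DD_{n,k}$ is by definition $\disc_{a_k} D$, the discriminant of $D$ viewed as a one-variable polynomial in $a_k$ with coefficients in $R_k := \ZZ[a_0,\ldots,\widehat{a_k},\ldots,a_n]$.

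The key observation is that $\sigma$ restricts to a ring isomorphism $R_k \xrightarrow{\sim} R_{n-k}$ (it sends the variable $a_i$, $i\neq k$, to $a_{n-i}$, and the omitted variable $a_k$ to $a_{n-k}$), and it carries $D$, regarded as a polynomial in $a_k$ over $R_k$, to $D$ again, now regarded as a polynomial in $a_{n-k}$ over $R_{n-k}$, \emph{with the same leading coefficient, constant term, and all intermediate coefficients matched up in order}. More precisely, if $D = \sum_{j} c_j(a_0,\ldots,\widehat{a_k},\ldots,a_n)\, a_k^{\,j}$ with $c_j \in R_k$, then applying $\sigma$ gives $D = \sum_j \sigma(c_j)\, a_{n-k}^{\,j}$, and $\sigma(c_j) \in R_{n-k}$; so the coefficient sequence of $D$ in $a_{n-k}$ over $R_{n-k}$ is exactly the $\sigma$-image of the coefficient sequence of $D$ in $a_k$ over $R_k$. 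Now the discriminant of a one-variable polynomial is a universal polynomial with integer coefficients in the coefficient sequence (e.g.\ via the Sylvester determinant \eqref{eq:Sylv}, or via formula \eqref{eq:disc_from_roots}), so it is equivariant for any ring homomorphism applied to those coefficients. Applying this with the homomorphism $\sigma$ yields
\[
  DD_{n,k} = \disc_{a_k} D = \sigma\bigl(\disc_{a_{n-k}} D\bigr) = \sigma\bigl(DD_{n,n-k}\bigr),
\]
where in the last step $\disc_{a_{n-k}} D = DD_{n,n-k}$ by definition. Unwinding what $\sigma$ does on the level of arguments (it substitutes $a_i \mapsto a_{n-i}$) gives exactly the claimed identity $DD_{n,n-k}(a_0,\ldots,\widehat{a_k},\ldots,a_n) = DD_{n,k}(a_n,\ldots,\widehat{a_k},\ldots,a_0)$.

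The only genuinely delicate point — the ``main obstacle,'' such as it is — is making sure the degree of $D$ in $a_k$ equals the degree of $D$ in $a_{n-k}$ so that the two discriminants are being formed from coefficient sequences of the same length and the naive equivariance of $\disc$ applies without an extra leading-coefficient fudge factor. This is guaranteed by Proposition~\ref{prop:ldg}: $\deg_{a_k} D_n = \deg_{a_{n-k}} D_n$ (both equal $n-1$ when $k\in\{0,n\}$ and both equal $n$ when $0<k<n$), and indeed $\sigma$ matches their leading terms. With that in hand the argument above is rigorous and short. One should also note the harmless edge cases $k=0$ (then $n-k=n$, and $\sigma$ swaps the roles of $a_0$ and $a_n$) and $k=n/2$ (then the statement becomes the assertion that $DD_{n,n/2}$ is itself invariant under $a_i \leftrightarrow a_{n-i}$), both of which are immediate specializations of the displayed identity. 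I would write the proof in essentially three sentences: introduce $\sigma$, invoke Proposition~\ref{prop:D_sym} together with Proposition~\ref{prop:ldg} to see that $\sigma$ carries the $a_k$-expansion of $D$ to the $a_{n-k}$-expansion of $D$, and then invoke equivariance of the one-variable discriminant under $\sigma$.
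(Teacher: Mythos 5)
Your proof is correct and takes the same route as the paper, which simply says the lemma ``follows by taking the discriminant of both sides of Proposition~\ref{prop:D_sym} with respect to $a_k$.'' You have expanded that one-liner into a careful argument, introducing the involution $\sigma$ explicitly, noting the equivariance of $\disc$ under coefficient-ring homomorphisms, and flagging the (mild but real) point that $\deg_{a_k} D_n = \deg_{a_{n-k}} D_n$ so the universal discriminant formula is applied to coefficient sequences of the same length.
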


\begin{proof}
This follows by taking the discriminant of both sides of Proposition \ref{prop:D_sym} with respect to $a_k$.
\end{proof}

In view of this, we hereafter only need to consider those $DD_{n,k}$ with $k \leq n/2$.

\section{Main result: factorization of the double discriminant}
\label{section:factorization}
In this section we prove the striking factorization patterns of the double discriminant in terms of the powers $A_{n,k}^3$ and $B_{n,k}^2$ to prove Theorem \ref{thm:factorization-nonzero-k}.  While it may be possible to derive the square-fullness of $DD_{n,k}$ from the work of Lazard--McCallum (\cite[Theorem 1]{LM09}, which in turn is a corollary of Bus{\'e}--Mourrain \cite[Theorem 6.8]{BM09}), this approach appears both technical and less natural from a number-theoretic perspective.  In particular, Lazard--McCallum give a factorization
\begin{equation}
  \disc_y \disc_x f \sim PP^\infty QQ^\infty RR^\infty S
\end{equation}
of the double discriminant of a general bivariate polynomial $f = f(x,y)$. Here the symbol ``$\sim$'' means that either both sides are $0$, or they have the same irreducible factors. Moreover, the irreducible factors of $QQ^\infty$ and of $RR^\infty$ appear with multiplicities at least $2$ and $3$, respectively. If we can show that $P = P^\infty = S = 1$, then we can deduce that any irreducible factor of $DD_{n,k}$ appears to an exponent of at least $2$. Due to difficulties in explicitly computing the elimination ideals in \cite{LM09} as well as desiring a more explicit factorization, we approached the question from scratch to use algebraic (and number-theoretic) techniques.  This lets us provide insight on the factors in our factorization.

\subsection{The case \texorpdfstring{$k = 0$}{k = 0}}\label{sn:k0}

We now establish the complete factorization in the case $k=0$ (and therefore in the case $k=n$).  We do this case separately only to avoid some very minor technical details and for clarity.

\begin{theorem} \label{thm:DDn0_fzn}
    We have a factorization
    \begin{equation} \label{eq:DDn0_fzn}
      DD_{n,0} = A_{n,0}^3\cdot B_{n,0}^2
    \end{equation}
    in $\QQ[a_1,\ldots, a_n]$, where $A_{n,0}=\disc(f')$, and
    \begin{equation}
        B_{n,0}=n^{n-2} a_n^{(n-2)(n-4)}\prod_{i<j}\dfrac{f(s_i)-f(s_j)}{(s_i-s_j)^3}, \label{eq:B_{n,0}}
    \end{equation}
    where $s_1,\ldots,s_{n-1}$ are the roots of $f'$.
\end{theorem}

\begin{remark}
This verifies Theorem \ref{thm:DD_fzn_intro} in the case $k = 0$. By Gauss's lemma for polynomials, this factorization lifts to $\ZZ[a_1,\ldots,a_n]$ after possible rescaling, as $A_{n,0}$ may be divisible by an integer greater than $1$ and $B_{n,0}$ may have non-integer coefficients.
\end{remark}

Write $f(x) = f_0(x) + a_0$. To prove Theorem \ref{thm:DDn0_fzn}, we begin with a lemma expressing $DD_{n,0}$ in terms of the roots of $f' = f_0'$:

\begin{lemma}
\label{lem:disc-zero}
Let the roots of $f_0'$ be $s_1,\ldots, s_{n-1}$ (with multiplicity). We have the identities
    \begin{equation} \label{eq:D_from_s_i}
      D = \disc(f) = (-1)^{n(n-1)/2} n^n a_n^{n-1} \prod_{i} \big(a_0 + f_0(s_i)\big)
    \end{equation}
and
\begin{equation} \label{eq:DDn0}
    DD_{n,0}=\disc_{a_0}(\disc(f))= n^{2n(n-2)} a_n^{2(n-1)(n-2)}\prod_{i<j}\big(f_0(s_i)-f_0(s_j)\big)^2.
\end{equation}
\end{lemma}

\begin{proof}
    Since $f'(x) = na_nx^{n-1} + \cdots + a_1$, we have
    \begin{align*}
        D
        &= (-1)^{n(n-1)/2} a_n^{-1} \Res(f(x),f'(x))
        &&\textnormal{by Equation~(\ref{eq:disc_from_res})}\\
        &= (-1)^{n(n-1)/2} a_n^{-1} (n a_n)^n \prod_i f(s_i)
        &&\textnormal{by Equation~(\ref{eq:res_from_roots}) and Proposition~\ref{prop:res-properties}\ref{item:res-properties-swap}}\\
        &= (-1)^{n(n-1)/2} n^n a_n^{n-1} \prod_i (a_0 + f_0(s_i)).
    \end{align*}
    Taking the discriminant using \eqref{eq:disc_from_roots} yields
    \begin{align*}
        DD_{n,0}&= \big((-1)^{n(n-1)/2}n^n a_n^{n-1}\big)^{2(n-1)} \prod_{i<j}\big(f_0(s_i)-f_0(s_j)\big)^2 \\
        &= n^{2n(n-2)} a_n^{2(n-1)(n-2)}\prod_{i<j}\big(f_0(s_i)-f_0(s_j)\big)^2,
    \end{align*}
    as desired.
\end{proof}

\begin{proof}[Proof of Theorem \ref{thm:DDn0_fzn}]
    It follows from the preceding lemma that an identity of the shape \eqref{eq:DDn0_fzn} holds, where $C = n^{2n(n-2)}$ and $A_{n,0}, B_{n,0} \in \QQ[a_n, s_1, \ldots, s_{n-1}]$ are as defined in the theorem. Indeed, one checks that $A_{n,0}$ and $B_{n,0}$ are symmetric in the $s_i$'s and invariant under adding a constant to $f$, and thus they belong to the rational function field $\QQ(a_1,\ldots,a_n)$. However, it is not obvious that $B_{n,0}$ is a polynomial.
    
    First, we show that $(s_i-s_j)^3\mid f_0(s_i)-f_0(s_j)$ in $\QQ[a_n, s_1, \ldots, s_{n-1}]$. Write $f_0(s_i)-f_0(s_j) = \int_{s_j}^{s_i} f_0'(x)\, dx$. Observe that $f_0'(x)=(x-s_i)(x-s_j)g(x)$ for some polynomial $g \in \QQ[a_n,s_1,\ldots, s_{n-1},x]$. Hence, if we substitute $t=x-s_j$ and let $h=s_i-s_j$, we get
    \begin{equation}
      f_0(s_i)-f_0(s_j) = \int_0^h t(t-h)g(t+s_j)\, dt
    \end{equation}
    Note that each term in the integrand has either a $t^2$ or an $ht$ factor, so when we integrate, each term in the result will have either a $t^3$ or an $ht^2$ factor, so taking the difference from $h$ to $0$ results in an $h^3$ factor as desired. Observe that $A_{n,0}=(na_n)^{2(n-2)}\prod\limits_{i<j}(s_i-s_j)^2$, hence we have $A_{n,0}^3\mid DD_{n,0}$ in $\QQ[a_n, s_1, \ldots, s_{n-1}]$.

    Therefore $B_{n,0} \in \QQ[a_n, s_1, \ldots, s_{n-1}]$ is a polynomial symmetric in the $s_i$'s. Hence $B_{n,0}$ is a polynomial in $a_n$ and the elementary symmetric functions
    \[
      \sum_{i_1 < \cdots < i_r} s_{i_1} \cdots s_{i_r} = (-1)^{n-r}\frac{ra_r}{na_n}.
    \]
    So $B_{n,0}$ has no denominator except possibly a power of $a_n$.
    But $B_{n,0}$ cannot have $a_n$ in the denominator either since $DD_{n,0} = A_{n,0}^3 \cdot B_{n,0}^2$ is a polynomial and $A_{n,0} = \disc(f')$ is a polynomial not divisible by $a_n$ (even if we plug in $a_n=0$, the polynomial $f'$ can still have non-zero degree $n-1$ discriminant as the binary form $(n-1)a_{n-1}x^{n-2}y + \cdots + a_1y^{n-1}$ can have $n$ distinct roots in $\PP^1$).
\end{proof}

Note that $A_{n,0}$ is irreducible for $n\geq3$ (and constant for $n=2$) because it is the discriminant of a polynomial $f'$ with independent indeterminate coefficients. We conjecture that $B_{n,0}$ is irreducible as well, indeed that $\dfrac{f_0(s_i)-f_0(s_j)}{(s_i-s_j)^3}$ is irreducible as a polynomial in the $s$'s and $a_n$.

\subsection{The case \texorpdfstring{$1\leq k\leq n-1$}{1 ≤ k ≤ n − 1}}
We can now complete the full range of cases.
\begin{theorem}\label{thm:factorization-nonzero-k}
	We have
	\[
		DD_{n,k} = R_0 R_n A^3 B^2
	\]
	for some (concrete) polynomials $A,B\in\QQ[a_0,\dots,\widehat{a_k},\dots,a_n]$,
	\[
		R_0 :=
		\begin{cases}
			a_0 &\textnormal{if }k=1,\\
			1 &\textnormal{otherwise},
		\end{cases}
		\qquad\textnormal{and}\qquad
		R_n :=
		\begin{cases}
			a_n &\textnormal{if }k=n-1,\\
			1 &\textnormal{otherwise}.
		\end{cases}
	\]
\end{theorem}

We first show why the anomalous monomial factor $a_0$ in the case $k = 1$ (and similarly $a_n$ in the case $k=n-1$) appears.
\begin{lemma}
    \label{lem:a0_Dn1}
    $a_0 \mid DD_{n,1}$.
\end{lemma} 
\begin{proof}
    Let $f=gx+a_0$. Using the product formula for discriminants (Proposition \ref{prop:res-properties}\ref{item:disc-mult}),
    \[\disc(gx)=\disc(g)\cdot\Res(g,x)^2\cdot \disc(x)=\disc(g)\cdot g(0)^2 \cdot 1 = \disc(g) \cdot a_1^2.\]
    Note that $\disc(g x)=D(0,a_1,\ldots,a_n)$, so in particular, every term in $D$ is divisible either by $a_0$ or by $a_1^2$. This in turn means that every term in $\frac{\partial}{\partial a_1}D$ is divisible by $a_0$ or $a_1$, meaning $D$ and $\frac{\partial}{\partial a_1}D$ both vanish when $a_0=a_1=0$, and hence $DD_{n,1}$ vanishes whenever $a_0=0$, establishing $a_0 \mid DD_{n,1}$.
\end{proof}

Let $1\leq k\leq n-1$. Write $f(x) = a_kx^k + f_0(x)$. In place of the polynomial $f'(x) = f_0'(x)$ in the case $k=0$ (see section~\ref{sn:k0}), we now consider the polynomial
\[
	g(x)
	= x f'(x) - k f(x)
 	= x f_0'(x) - k f_0(x)
	= \sum_{\substack{0\leq i\leq n\\ i\neq k}} (i-k)a_ix^i
	= (n-k) a_n x^n + \dots + (-k) a_0.
\]
It is related to the Laurent polynomials $F(x) := \frac{f(x)}{x^k}$ and $F_0(x) := \frac{f_0(x)}{x^k}$ by
\[
	g(x) = x^{k+1} F'(x) = x^{k+1} F_0'(x).
\]
Let $s_1,\dots,s_n$ be the (nonzero!)\ roots of $g(x)$ in the algebraic closure of $\QQ(a_0,\dots,\widehat{a_k},\dots,a_n)$ (a priori counted with multiplicity, but we will actually show that they are distinct).

\begin{lemma}\label{lem:disc-nonzero-k}
	We have
	\[
		D = \disc(f)
		= (-1)^{\frac{n(n-1)}{2}-k(n-k)} \frac{(ka_0)^k ((n-k)a_n)^{n-k}}{a_0 a_n} \prod_{i=1}^n \left(a_k + F_0(s_i)\right).
	\]
\end{lemma}
\begin{proof}
	Noting that $g(x) = (n-k)a_n (x-s_1)\cdots(x-s_n)$ and in particular $-ka_0 = (-1)^n (n-k)a_ns_1\cdots s_n$, we compute
	\allowdisplaybreaks
	\begin{align*}
		D
		&= (-1)^{\frac{n(n-1)}{2}} \frac{1}{a_n} \Res(f(x), f'(x))
		&&\textnormal{by Equation~(\ref{eq:disc_from_res})}\\
		&= (-1)^{\frac{n(n+1)}{2}} \frac{1}{a_0 a_n} \Res(f(x), x f'(x))
		&&\textnormal{by Proposition~\ref{prop:res-properties}\ref{item:res-properties-swap}\ref{item:res-properties-mult}\ref{item:res-properties-x}}\\
		&= (-1)^{\frac{n(n+1)}{2}} \frac{1}{a_0 a_n} \Res(f(x), x f'(x)-k f(x))
		&&\textnormal{by Equation~(\ref{eq:res_from_roots})}\\
		&= (-1)^{\frac{n(n-1)}{2}} \frac{((n-k)a_n)^n}{a_0 a_n} \prod_{i=1}^n f(s_i)
		&&\textnormal{by Proposition~\ref{prop:res-properties}\ref{item:res-properties-swap} and Equation~(\ref{eq:res_from_roots})}\\
		&= (-1)^{\frac{n(n-1)}{2}} \frac{((n-k)a_n)^n}{a_0 a_n} \prod_{i=1}^n (a_k s_i^k + f_0(s_i))\\
		&= (-1)^{\frac{n(n-1)}{2}-(n+1)k} \frac{(ka_0)^k ((n-k)a_n)^{n-k}}{a_0 a_n} \prod_{i=1}^n \left(a_k + \frac{f_0(s_i)}{s_i^k}\right)\\
		&= (-1)^{\frac{n(n-1)}{2}-k(n-k)} \frac{(ka_0)^k ((n-k)a_n)^{n-k}}{a_0 a_n} \prod_{i=1}^n \left(a_k + F_0(s_i)\right).&&
	\qedhere
	\end{align*}
\end{proof}

\begin{corollary}
\label{cor:DDnk F0}
	We have
	\[
		\disc(g) = ((n-k)a_n)^{2n-2} \prod_{i<j} (s_i-s_j)^2
	\]
	and
	\[
		DD_{n,k} = \disc_{a_k}(\disc(f)) = \left(\frac{(ka_0)^k ((n-k)a_n)^{n-k}}{a_0 a_n}\right)^{2n-2} \prod_{i<j} \left(F_0(s_i) - F_0(s_j)\right)^2.
	\]
\end{corollary}
\begin{proof}
	The first equality is clear as $g(x)$ has degree $n$, leading coefficient $(n-k)a_n$, and roots $s_1,\dots,s_n$.

	For the second equality, note that the previous lemma implies that $\disc(f)$ as a polynomial in $a_k$ has degree $n$, leading coefficient \[(-1)^{\frac{n(n-1)}{2}-k(n-k)} \frac{((n-k)a_n)^{n-k}(ka_0)^k}{a_0 a_n},\] and roots $-F_0(s_1),\dots,-F_0(s_n)$.
\end{proof}

Thus, $DD_{n,k} = R_0 R_n A^3 B^2$ with
\begin{equation}\label{eq:def-AB-nonzero-k}
	A := \frac{\disc(g)}{R_0 R_n}
	\qquad\textnormal{and}\qquad
	B := R_0 R_n \left(\frac{(ka_0)^k ((n-k)a_n)^{n-k-3}}{a_0 a_n}\right)^{n-1} \prod_{i<j}\frac{F_0(s_i) - F_0(s_j)}{(s_i-s_j)^3}.
\end{equation}
It remains to show that $A$ and $B$ are polynomials in $a_0,\dots,\widehat{a_k},\dots,a_n$. (It is at least clear that $\disc(g)$ is a polynomial.)

\begin{lemma}~
\begin{enumerate}[(a)]
	\item If $2\leq k\leq n-1$, then $a_0\nmid\disc(g)$.
	\item If $k=1$, then $a_0\mid\disc(g)$ but $a_0^2\nmid\disc(g)$.
\end{enumerate}
\end{lemma}
\begin{proof}
\begin{enumerate}[(a)]
\item
	We can plug in rational numbers for $a_0,\dots,a_n$ so that the resulting polynomial $g(x) = \sum_{i=0}^n (i-k) a_i x^i$ becomes $g(x) = x^n - x$.
	(This is possible because the $x^k$-coefficient of the desired polynomial $x^n-x$ is zero since $2\leq k\leq n-1$.) We then have $a_0=0$ but $\disc(g)\neq0$. Hence, $a_0\nmid\disc(g)$.
\item
	By definition, $g(x)$ has $x$-coefficient zero since $k=1$. If we plug in $a_0=0$, then its constant coefficient also becomes zero, so $g(x)$ becomes divisible by $x^2$, and therefore $\disc(g)=0$. Thus, $a_0\mid\disc(g)$.

	To show that $a_0^2\nmid\disc(g)$, we pick elements $a_0,\dots,a_n$ of $\QQ[t]$ so that the resulting polynomial $g(x)$ becomes
	\[
		g(x) =
		\begin{cases}
			x^2 - t^2 &\textnormal{if }n\in\{2,3\},\\
			(x^{n-2}-1)(x^2 - t^2) &\textnormal{if }n\geq4.
		\end{cases}
	\]
	(This is possible because the $x$-coefficient of the desired polynomial is zero.) We then have $t^2\mid a_0$ but $t^4\nmid\disc(g)$. (In the case $n=3$, keep in mind that we are interpreting $g(x)$ as a polynomial of degree $3$ when computing its discriminant!) Hence, $a_0^2\nmid\disc(g)$.
\qedhere
\end{enumerate}
\end{proof}

\begin{corollary}\label{A_cor}
	We have $R_0R_n\mid\disc(g)$, so $A = \disc(g)/R_0R_n$ is a polynomial in $a_0,\dots,\widehat{a_k},\dots,a_n$. Moreover, $a_0,a_n\nmid A$. In particular, $A\neq0$ as an element of $\QQ[a_0,\dots,\widehat{a_k},\dots,a_n]$, so the roots $s_1,\dots,s_n$ of $g(x)$ are distinct.
\end{corollary}
\begin{proof}
	The lemma shows $R_0 \mid \disc(g)$ and $a_0 \nmid A$. By symmetry, $R_n \mid \disc(g)$ and $a_n\nmid A$.
\end{proof}

Since $F_0'(x) = g(x) / x^{k+1} = (n-k)a_n (x-s_1)\cdots(x-s_n)/x^{k+1}$, the following general lemma about Laurent polynomials implies that $\frac{F_0(s_i) - F_0(s_j)}{(s_i-s_j)^3}$ can be written as a polynomial in $a_n$ and the roots $s_1,\dots,s_n$ of $g(x)$.

\begin{lemma}\label{lem:laurent-antiderivative}
	There are symmetric Laurent polynomials $t_\ell(x_1,x_2) \in \QQ[x_1^{\pm1},x_2^{\pm1}]$ for $\ell\in\ZZ$ such that:
	\begin{enumerate}[(a)]
	\item
		For any Laurent polynomial $F(x)\in K[x^{\pm1}]$ with coefficients in a field $K$ of characteristic zero, if its derivative satisfies
		\begin{equation}\label{eq:Fder}
			F'(x) = (x-s_1)(x-s_2)\sum_{\ell\in\ZZ} c_\ell x^{\ell-2}
		\end{equation}
		with $s_1,s_2\in K^\times$ and all but finitely many $c_\ell\in K$ zero, then
		\[
			F(s_1) - F(s_2) = (s_1 - s_2)^3 \sum_{\ell\in\ZZ} c_\ell t_\ell(s_1,s_2).
		\]
	\item
		We have $t_\ell(x_1,x_2) \in \QQ[x_1,x_2]$ for all $\ell\geq1$.
	\item
		We have $t_{-\ell}(x_1,x_2) = x_1^{-2} x_2^{-2}\cdot t_\ell(x_1^{-1},x_2^{-1})$ for all $\ell\in\ZZ$.
	\item
		We have $t_0(x_1,x_2) = -\frac12 x_1^{-1} x_2^{-1}$ and $t_1(x_1,x_2) = t_{-1}(x_1,x_2) = 0$.
	\item
		For $s_1 = s_2 = s \in K^\times$ and for coefficients $c_\ell\in K$ as in (a), we have $\sum\limits_{\ell\in\ZZ} c_\ell t_\ell(s,s) = -\frac16 \sum\limits_{\ell\in\ZZ} c_\ell s^{\ell-2}$.
	\end{enumerate}
\end{lemma}

\begin{proof}
	For $\ell\geq2$, let $G_\ell(x)$ be an antiderivative of the polynomial $(x-s_1)(x-s_2)x^{\ell-2}$.
	Then,
	\begin{equation}\label{eq:Gderpos}
		G_\ell'(x)
		= (x-s_1)(x-s_2)x^{\ell-2}
	\end{equation}
	and
	\[
		G_1(s_1) - G_1(s_2)
		= \int_{s_2}^{s_1} (x-s_1)(x-s_2) x^{\ell-2} dx
		= \int_0^{s_1-s_2} (x-(s_1-s_2)) x (x+s_2)^{\ell-2} dx.
	\]
	As before, each term in the integrand is divisible by $x^2$ or by $x(s_1-s_2)$. Hence, integrating term by term, we can write
	\begin{equation}\label{eq:Gdiffpos}
		G_\ell(s_1) - G_\ell(s_2)
		= (s_1-s_2)^3\cdot t_\ell(s_1,s_2).
	\end{equation}
	with a polynomial $t_\ell(x_1,x_2)\in\QQ[x_1,x_2]$. Since $G_\ell(s_1) - G_\ell(s_2)$ and $(s_1-s_2)^3$ are both antisymmetric, the polynomials $t_\ell(x_1,x_2)$ are symmetric. Since the only terms in the integrand not divisible by $x^3$ or by $x^2(s_1-s_2)$ are $x^2 s_2^{\ell - 2}$ and $-(s_1-s_2)xs_2^{\ell-2}$, we have
	\begin{equation}\label{eq:tells}
		t_\ell(s, s) = \tfrac13 s^{\ell-2} - \tfrac12 s^{\ell-2} = -\tfrac16 s^{\ell-2}.
	\end{equation}
	
	For $\ell\geq2$, letting $H_\ell(x)$ be an antiderivative of the polynomial $(x-s_1^{-1})(x-s_2^{-1})x^{\ell-2}$, we have
	\begin{equation}\label{eq:Gderneg}
		\frac{d}{dx} H_\ell(x^{-1})
		= - (x^{-1} - s_1^{-1})(x^{-1} - s_2^{-1})x^{-\ell+2}x^{-2}
		= - s_1^{-1} s_2^{-1} (x - s_1)(x - s_2) x^{-\ell-2}
	\end{equation}
	and
	\begin{equation}\label{eq:Gdiffneg}
		H_\ell(s_1^{-1}) - H_\ell(s_2^{-1})
		= (s_1^{-1} - s_2^{-1})^3\cdot  t_\ell(s_1^{-1},s_2^{-1})
		= - s_1^{-3} s_2^{-3} (s_1 - s_2)^3\cdot t_\ell(s_1^{-1},s_2^{-1}).
	\end{equation}
	The derivative of any Laurent polynomial $F(x)$ has $x^{-1}$-coefficient zero. Hence, looking at the $x^{-1}$-coefficients in Equation~(\ref{eq:Fder}), we obtain the relation
	\begin{equation}\label{eq:rel}
		0 = c_{-1} - (s_1+s_2) c_0 + s_1 s_2 c_1.
	\end{equation}
	Lastly, we let
	\[
		I(x) :=
		\tfrac12 c_1 x^2
		+ (c_0 - (s_1+s_2) c_1) x
		+ ((s_1+s_2) c_{-1} - s_1 s_2 c_0) x^{-1}
		- \tfrac12 s_1 s_2 c_{-1} x^{-2}.
	\]
	Then,
	\begin{equation}\label{eq:Hder}
	\begin{aligned}
		I'(x)
		&= c_1 x
		+ (c_0 - (s_1 + s_2) c_1)
		- ((s_1+s_2)c_{-1} - s_1s_2c_0)x^{-2}
		+ s_1s_2c_{-1}x^{-3} \\
		&\stackrel{\mathclap{(\ref{eq:rel})}}{=} (x-s_1)(x-s_2)\sum_{\ell\in\{1,0,-1\}} c_\ell x^{\ell-2},
	\end{aligned}
	\end{equation}
	and
	\begin{equation}\label{eq:Hdiff}
	\begin{aligned}
		I(s_1) - I(s_2)
		&= \tfrac12 c_1 (s_1^2 - s_2^2)
		+ (c_0 - (s_1+s_2)c_1) (s_1 - s_2) \\
		&\qquad + ((s_1+s_2)c_{-1} - s_1s_2c_0) (s_1^{-1} - s_2^{-1})
		- \tfrac12 s_1 s_2 c_{-1} (s_1^{-2} - s_2^{-2}) \\
		&= -\tfrac12 c_1 (s_1^2 - s_2^2)
		+ 2 c_0 (s_1 - s_2)
		- \tfrac12 c_{-1} s_1^{-1} s_2^{-1} (s_1^2 - s_2^2) \\
		&\stackrel{\mathclap{(\ref{eq:rel})}}{=}{} 2c_0(s_1-s_2)
		- \tfrac12 c_0 s_1^{-1} s_2^{-1} (s_1^2 - s_2^2) (s_1+s_2) \\
		&= - \tfrac12 c_0 s_1^{-1} s_2^{-1} (s_1 - s_2)^3.
	\end{aligned}
	\end{equation}
	Let
	\[
		J(x) := \sum_{\ell\geq2} c_\ell G_\ell(x) - \sum_{\ell\geq2} c_{-\ell} s_1 s_2 H_\ell(x^{-1}) + I(x).
	\]
	Combining Equations~(\ref{eq:Gderpos}), (\ref{eq:Gderneg}), (\ref{eq:Hder}), we see that $J'(x) = (x-s_1)(x-s_2)\sum_{\ell\in\ZZ} c_\ell x^{\ell-2} = F'(x)$ and therefore $J(s_1)-J(s_2) = F(s_1)-F(s_2)$. Combining Equations~(\ref{eq:Gdiffpos}), (\ref{eq:Gdiffneg}), (\ref{eq:Hdiff}), we then obtain
	\begin{align*}
		F(s_1) - F(s_2)
		&= J(s_1) - J(s_2) \\
		&= (s_1-s_2)^3 \cdot \left(
			\sum_{\ell\geq2} c_\ell t_\ell(s_1,s_2)
			+ \sum_{\ell\geq2} c_{-\ell} s_1^{-2} s_2^{-2} t_\ell(s_1^{-1},s_2^{-1})
			+ c_0 \cdot \left(-\frac12 s_1^{-1} s_2^{-1}\right)
		\right).
	\end{align*}
	This proves parts (a)--(d) of the claim as each $t_\ell(x_1,x_2)$ with $\ell\geq2$ is a symmetric polynomial in $\QQ[x_1,x_2]$. For part (e), we compute:
    \[
	\begin{multlined}
	    \sum_{\ell\geq2} c_\ell t_\ell(s,s) + \sum_{\ell\geq2} c_{-\ell} s^{-4} t_\ell(s^{-1},s^{-1}) + c_0\cdot\left(-\frac12 s^{-2}\right) \\
		\qquad\stackrel{\mathclap{(\ref{eq:tells})}}{=} -\frac16 \left( \sum_{\ell\geq2} c_\ell s^{\ell-2} + \sum_{\ell\geq2} c_{-\ell} s^{-\ell-2} + 3 c_0 s^{-2} \right)
		\stackrel{\mathclap{(\ref{eq:rel})}}{=} -\frac16 \sum_{\ell\in\ZZ} c_\ell s^{\ell-2}.
	\end{multlined}
	\qedhere
    \]
\end{proof}

\begin{proof}[Proof of Theorem~\ref{thm:factorization-nonzero-k}]
	Define $A$ and $B$ as in Equation~(\ref{eq:def-AB-nonzero-k}). According to Corollary~\ref{A_cor}, we have $A\neq0$ and the roots $s_1,\dots,s_n$ are distinct, so $B$ is well-defined. Let $e_p(r_1,\dots,r_q)$ denote the $p$-th elementary symmetric polynomial in $r_1,\dots,r_q$. Recall that
	\[
		F_0'(x) = g(x) / x^{k+1} = (n-k) a_n (x-s_1)\cdots(x-s_n) / x^{k+1}.
	\]
	Thus, for any $i<j$,
	\[
		F_0'(x) = (x-s_i)(x-s_j) \cdot \sum_{p=0}^{n-2} (n-k) a_n (-1)^p e_p(s_1,\dots,\widehat{s_i},\dots,\widehat{s_j},\dots,s_n) x^{n-2-p-k-1}.
	\]
	By Lemma~\ref{lem:laurent-antiderivative}, we then have
	\[
		\frac{F_0(s_i) - F_0(s_j)}{(s_i-s_j)^3}
		= \sum_{p=0}^{n-2} t_{n-p-k-1}(s_i,s_j) (n-k) a_n (-1)^p e_p(s_1,\dots,\widehat{s_i},\dots,\widehat{s_j},\dots,s_n)
	\]
	for some symmetric Laurent polynomials $t_\ell(x_1,x_2)\in\QQ[x_1^{\pm1},x_2^{\pm1}]$. Thus, the product
	\[
		B = R_0 R_n \left(\frac{(ka_0)^k ((n-k)a_n)^{n-k-3}}{a_0 a_n}\right)^{n-1} \prod_{i<j}\frac{F_0(s_i) - F_0(s_j)}{(s_i-s_j)^3}.
	\]
	is a nonzero constant times a power of $a_0$ times a power of $a_n$ times a power of $s_1\cdots s_n = \pm ka_0/(n-k)a_n$ times a symmetric polynomial in $s_1,\dots,s_n$. Possibly up to some factors of $a_0$ and $a_n$ in the denominator, $B$ can therefore be written as a polynomial in the coefficients of $g(x) = (n-k)a_n + \cdots + (-k)a_0 = (n-k)a_n(x-s_1)\cdots(x-s_n)$, or equivalently as a polynomial in the variables $a_0,\dots,\widehat{a_k},\dots,a_n$.

	Since $DD_{n,k} = R_0 R_n A^3 B^2$ is a polynomial and $a_0^2, a_n^2 \nmid R_0 R_n$ and $a_0, a_n \nmid A$, the factor $B$ in fact cannot have $a_0$ or $a_n$ in the denominator. Thus, $B$ is a polynomial.
\end{proof}

\subsection{Significance of the \texorpdfstring{$A$}{A} and \texorpdfstring{$B$}{B} factors}
\label{sec:signif_AB}
The factorization of $DD_{n,k}$ has the following pleasing interpretation in terms of the roots of $f$.
\begin{theorem}\label{thm:triple_double} Let $f$ be a polynomial of degree $n \geq 2$ over a field of characteristic $0$.
  \begin{enumerate}[(a)]
    \item\label{item:triple} If $f$ has a root of multiplicity at least~$3$, then $A_{n,k} = 0$ for $0 \leq k < n$.
    \item\label{item:double} If $f$ has two roots of multiplicity at least~$2$, or one root of multiplicity at least~$4$, then $B_{n,k} = 0$ for $0 \leq k < n$.
  \end{enumerate}
\end{theorem}
\begin{proof}
  \begin{enumerate}[(a)]
    \item By passing to the limit, it suffices to prove this when $a_0 \neq 0$, so that $R_0R_n\neq 0$. Then $f$ is divisible by $(x-s)^3$, $s \neq 0$, so $f'$ and $g(x) = x f' - k f$ are divisible by $(x-s)^2$. Thus $\disc f'$ and $\disc g$ vanish, implying that $A_{n,0} = \disc f' = 0$ and, for $1 \leq k < n$, we have $A_{n,k} = \disc g/(R_0 R_n) = 0$.
    \item We may assume $f$ is divisible by $(x - s)^2(x - t)^2$ with $s \neq t$, since the case $s = t$ follows by passing to the limit as $B_{n,k}$ is a polynomial.
    
    Observe that $f'$ is divisible by $(x - s)(x - t)$, and so we may take $s_1 = s, s_2 = t$ in the factorization of $f'$ (respectively $g$). Then one of the factors of $B_{n,0}$ is
    \[
      \frac{f_0(s_1)-f_0(s_2)}{(s_1-s_2)^3} = \frac{0 - 0}{(s_1-s_2)^3} = 0,
    \]
    and analogously for $B_{n,k}$, $1 \leq k \leq n-1$.
    
    To complete the proof, we must show that no other factor of $B_{n,k}$ has an identically vanishing denominator. For this, we can exhibit a polynomial with two double roots such that $f'$ (respectively $g$) has no multiple roots. For example, if $1 < r_1 < \cdots < r_{n-4} < 2$ are real numbers (note that $n \geq 4$), then
    \[
      f(x) = (x-1)^2(x-r_1)\cdots(x-r_{n-4})(x-2)^2
    \]
    has two double roots, while $f'$ has no multiple roots, since by Rolle's theorem the roots of $f'$ interlace with those of $f$. Moreover, by counting the changes in direction of $F(x)=f(x)/x^k$, we obtain that $g(x) = x^{k+1} F'(x)$ has $n$ distinct real roots as well. Since we also ensured that $a_0$ and $a_n$ are nonzero, the identities \eqref{eq:B_{n,0}} and \eqref{eq:def-AB-nonzero-k} are applicable without division by $0$, and we get $B_{n,k} = 0$. \qedhere
  \end{enumerate}
\end{proof}

The converse is also true in the appropriate level of generality.
\begin{theorem}\label{thm:triple_double_converse}
	Let $f\in\CC[x]$ be a polynomial of degree $n\geq2$, and let $A_{n,k}$, $B_{n,k}$ be the factors of its double discriminant with respect to one coefficient $a_k$, $0 \leq k < n$. If $k\neq0$, assume $f(0)\neq0$.
	\begin{enumerate}[(a)]
	\item
		If $A_{n,k} = 0$, then there is a value of $c\in\CC$ for which $\tilde f(x) := f(x) + cx^k$ has a root of multiplicity at least~$3$.
	\item
		If $B_{n,k} = 0$, then there is a value of $c\in\CC$ for which $\tilde f(x) := f(x) + cx^k$ has two roots of multiplicity at least~$2$ or a root of multiplicity at least~$4$.
	\end{enumerate}
\end{theorem}
\begin{remark}
  The condition $f(0) \neq 0$ cannot be removed in general, as can be seen in an example such as $f(x) = x^4 + x^2$, $n = 4$, $k = 3$, where $A = \disc(x f'(x) - k f(x)) = \disc(x^4 - x^2) = 0$ although $f(x) + cx^k = x^4 + c x^3 + x^2$ has no triple root for any $c\in\CC$.
\end{remark}
\begin{proof}[Proof of Theorem \ref{thm:triple_double_converse}]
	\begin{enumerate}[(a)]
	\item
		If $k=0$, then $A_{n,k}$ is the discriminant of the polynomial $f'$ of degree $n-1$, so $f'$ has a root $s\in\CC$ of multiplicity at least~$2$. Pick $c\in\CC$ so that $\tilde f(s) = f(s) + c = 0$. Then, $\tilde f(x)$ has a root of multiplicity at least~$3$ at $s$.
		
		If $1\leq k<n$, then $A_{n,k}$ is (up to possibly a factor of $a_0\neq0$ or $a_n\neq0$) the discriminant of the polynomial $x f'(x) - k f(x)$ of degree $n$, so this polynomial has a root $s\in\CC$ of multiplicity at least~$2$. Since $f(0)\neq0$, we have $s\neq0$. We can then choose $c\in\CC$ so that $\tilde f(s) = f(s) + c s^k = 0$. Then $\tilde f(x)$ has a root of multiplicity at least~$3$ at $s$: Recall that $s$ is a root of $x f'(x) - k f(x) = x \tilde f'(x) - k \tilde f(x)$ of multiplicity at least~$2$. Equivalently, $s \tilde f'(s) - k \tilde f(s) = 0$, which implies $\tilde f'(s) = 0$, and $s \tilde f''(s) - (k-1) \tilde f'(s) = 0$, which implies $\tilde f''(s) = 0$.
	\item
		We first discuss the case $k=0$. Let $s_1,\dots,s_{n-1}$ be the roots of $f'(x)$, counted with multiplicity. Since $B_{n,k}=0$, one of the factors $\frac{f(s_i) - f(s_j)}{(s_i - s_j)^3}$ (which, by the proof of Theorem \ref{thm:DDn0_fzn}, is a polynomial in the coefficients of $\frac{f'(x)}{(x-s_i)(x-s_j)}$) must be zero.
		
		If $s_i \neq s_j$, this simply means that $f(s_i) = f(s_j)$. We can therefore choose $c\in\CC$ so that $\tilde f(s_i) = \tilde f(s_j) = 0$. Then, $\tilde f(x)$ has roots of multiplicity at least~$2$ at $s_i$ and $s_j$ since $\tilde f'(s_i) = f'(s_i) = 0$ and $\tilde f'(s_j) = f'(s_j) = 0$.
		
		If $s_i = s_j = s$, then looking at Lemma~\ref{lem:laurent-antiderivative}(a)(e), one sees that the vanishing of the polynomial $\frac{f(s_i) - f(s_j)}{(s_i - s_j)^3}$ is equivalent to $f'(x)$ having a root at $s$ of multiplicity at least~$3$. Choose $c\in\CC$ so that $\tilde f(s) = 0$. Then, $\tilde f(x)$ has a root at $s$ of multiplicity at least~$4$.
		
		The case $1\leq k<n$ works similarly. Since $f(0)\neq0$, the roots $s_1,\dots, s_n$ of the degree $n$ polynomial $x f'(x) - k f(x)$ (counted with multiplicity) are non-zero. They are also the non-zero roots of the derivative of $F_0(x) = f(x) / x^k$. Since $B_{n,k}=0$, one of the factors $\frac{F_0(s_i) - F_0(s_j)}{(s_i-s_j)^3}$ must be zero. Let $F(x) := \tilde f(x)/x^k = (f(x)+cx^k)/x^k = F_0(x) + c$.

		If $s_i\neq s_j$, then $F_0(s_i) = F_0(s_j)$. Choose $c\in\CC$ so that $F(s_i) = F(s_j) = 0$. Then, $F(x)$ has roots of multiplicity at least~$2$ at $s_i$ and $s_j$, and therefore so does $\tilde f(x) = F(x) x^k$.

		If $s_i = s_j$, then Lemma~\ref{lem:laurent-antiderivative}(a)(e) shows that $F_0'(s)$ has a root of multiplicity at least~$3$ at $s$. Choose $c\in\CC$ so that $F(s) = 0$. Then, $F(x)$ has a root of multiplicity at least~$4$ at $s$, and therefore so does $\tilde f(x) = F(x)x^k$.
	\qedhere
	\end{enumerate}
\end{proof}

\begin{remark}
  A natural inquiry is to carry the iteration further to yield a ``triple discriminant,'' etc. The factorizations of Theorem \ref{thm:DD_fzn_intro} show that the discriminant of $DD_{n,k}$ (for $n \geq 3$) with respect to any of its variables is unfortunately $0$. However, it is interesting to form expressions such as $\disc_{a_{\ell}}(A_{n,k})$ or $\Res_{a_\ell}(A_{n,k}, B_{n,k})$. Do these govern coalescences of roots in higher-dimensional families of polynomials $f_0 + a_k x^k + a_\ell x^\ell$?
\end{remark}

\section{The constant factor \texorpdfstring{$c_{n,k}$}{cₙ,ₖ}} \label{sec:const}
\label{section:constant}
The foregoing factorizations take place over $\QQ$. When generalizing to $\ZZ$ or fields of finite characteristic, we must also consider the constant factor $c_{n,k}$, the largest integer dividing $DD_{n,k}$, which we catalog in Table \ref{tab:c} below. For $n \leq 6$ this was done by directly computing $DD_{n,k}$. For $n > 6$ this became impractical, so we tried an assortment of ``compressed'' polynomials $f$ (with certain coefficients set to $0$ or other constants), including over finite and $p$-adic fields, and reported the GCD of the constant factors found. The correct constant $c_{n,k}$ is then guaranteed to divide whatever is written in Table \ref{tab:c}.

\begin{table}[bhtp]
\centering
\begin{tabular}{ c|c|c|c|c|c|c|c|c }
$n\setminus k$ & 0 & 1 & 2 & 3 & 4 & 5 & 6 & 7 \\
\hline
3 & $2^4$ & $2^4$ & & & & & \\
4 & $2^4$ & $2^8$ & $2^8$ & & & & \\
5 & $2^8$ & $2^8$ & $2^8$ & & & & \\
6 & $2^8$ & $2^{12}$ & $2^{12}$ & $2^{12}3^6$ & & & \\ \hdashline
7 & $2^{12^{\vphantom{1^1}}}$ & $2^{12}$ & $2^{12}$ & $2^{12}$ & & & \\
8 & $2^{12}$ & $2^{16}$ & $2^{16}$ & $2^{16}$ & $2^{28}$ & & \\
9 & $2^{16}$ & $2^{16}$ & $2^{16}$ & $2^{16}3^6$ & $2^{16}$ & & \\
10 & $2^{16}$ & $2^{20}$ & $2^{20}$ & $2^{20}$ & $2^{28}$ & $2^{20}5^{10}$ & \\
11 & $2^{20}$ & $2^{20}$ & $2^{20}$ & $2^{20}$ & $2^{20}$ & $2^{20}$ & \\
12 & $2^{20}$ & $2^{24}$ & $2^{24}$ & $2^{24}3^6$ & $2^{36}$ & $2^{24}$ & $2^{32}3^{12}$ \\
13 & $2^{24}$ & $2^{24}$ & $2^{24}$ & $2^{24}$ & $2^{24}$ & $2^{24}$ & $2^{24}$ \\
14 & $2^{24}$ & $2^{28}$ & $2^{28}$ & $2^{28}$ & $2^{40}$ & $2^{28}$ & $2^{36}$ & $2^{28}7^{14}$ \\
15 & $2^{28}$ & $2^{28}$ & $2^{28}$ & $2^{28}3^{6}$ & $2^{28}$ & $2^{28}5^{10}$ & $2^{28}3^{12}$ & $2^{28}$
\end{tabular}
\caption{The outlying constant $c_{n,k}$ of the double discriminant $DD_{n,k}$. For $n > 6$ the values of $c_{n,k}$ are unverified but must divide those given here.}
\label{tab:c}
\end{table}

In view of the orderliness of the values computed, we are inclined to believe that the constants computed in Table \ref{tab:c} are the correct ones. In particular, we observe the following intriguing patterns:
\begin{samepage}
\begin{conj}~\label{conj:c_full}
  \begin{enumerate}[(a)]
    \item The constant $c_{n,k}$ has the same prime divisors as $2 \gcd(k,n)$ if $k > 0$, while $c_{n,0} = 2^{4\floor{(n-1)/2}}$.
    \item $\nu_2(c_{n,k}) \geq 4\left\lfloor\frac{n-1}2\right\rfloor$, with equality when $k = 0$.
    \item For all $n$, $k$, and $p$, $\nu_p(c_{n,k})$ is a multiple of $2p$.
  \end{enumerate}
\end{conj}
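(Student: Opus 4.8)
\emph{Overall plan.} The plan is to bound $\nu_p(c_{n,k})$ from above and from below for each prime $p$ separately, exploiting that content is multiplicative (Gauss's lemma), that $c_{n,k}=\mathrm{cont}(DD_{n,k})$ with $DD_{n,k}=\disc_{a_k}(D_n)$, and that whenever the factorization of Conjecture~\ref{conj:DDnk_fzn_intro} is available the problem reduces to computing the contents of the individual factors. I would dispatch the case $k=0$ first via Theorem~\ref{thm:DDn0_fzn}, then attack the lower bounds uniformly, and finally the upper bounds.

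\emph{Lower bounds.} At $p=2$ the engine is the classical fact that $D_n\bmod 2$ is a perfect square $E_n^{2}$ in $\FF_2[a_0,\dots,a_n]$: the Vandermonde $\prod_{i<j}(r_i-r_j)$ is alternating over $\ZZ$, hence symmetric modulo $2$, hence polynomial in the $a_i$. Consequently $\partial_{a_k}D_n\equiv 0\pmod 2$ for every $k$; writing $\partial_{a_k}D_n=2H_k$ and using $\disc_{a_k}(D_n)=\pm\Res_{a_k}(D_n,\partial_{a_k}D_n)$ divided by the $a_k$-leading coefficient of $D_n$ pulls a factor $2^{\deg_{a_k}D_n}$ out of $DD_{n,k}$, which by Proposition~\ref{prop:ldg} is $2^{n-1}$ for $k=0$ and $2^{n}$ for $0<k<n$, recovering the divisibilities stated in the introduction. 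To reach the sharp exponent $2^{4\floor{(n-1)/2}}$ one refines this: modulo $2$ the residual resultant $\Res_{a_k}(D_n,H_k)$ equals the square $\Res_{a_k}(E_n,H_k)^{2}$, so additional powers of $4$ appear according to the $\gcd$ of $E_n$ and the successive cofactors over $\FF_2$, and one caps the total by reading the $2$-adic Newton polygon of $D_n$ in the variable $a_k$ over a carefully chosen integer fiber (a generic $2$-adic fiber overcounts, since every $2$-adic unit is $\equiv1\bmod 2$). For an odd prime $p$ the analogous collapse occurs only when $p\mid\gcd(n,k)$: the scalars $ja_j$ in the $f'$-block of the Sylvester matrix degenerate $D_n\bmod p$, as a polynomial in $a_k$, in exactly the manner analyzed in the proof of Proposition~\ref{prop:ldg} (the ``first phase / second phase'' accounting), forcing $\partial_{a_k}D_n,\partial_{a_k}^{2}D_n,\dots$ to acquire factors of $p$. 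Quantifying this should simultaneously yield $2p\mid\nu_p(c_{n,k})$ and, when $p\nmid 2\gcd(n,k)$, exhibit a monomial of $DD_{n,k}$ with unit coefficient mod $p$, proving that such $p$ do not divide $c_{n,k}$.

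\emph{Upper bounds.} For $k=0$ this is within reach: by Theorem~\ref{thm:DDn0_fzn} and multiplicativity of content, $\nu_2(c_{n,0})=3\,\nu_2(\mathrm{cont}(\disc(f')))+2\,\nu_2(\mathrm{cont}(B_{n,0}))$. Now $\disc(f')=D_{n-1}(b_0,\dots,b_{n-1})$ with $b_j=(j+1)a_{j+1}$ is a specialization of the primitive generic discriminant, so its content is the $\gcd$, over the monomials $\prod_j b_j^{e_j}$ occurring in $D_{n-1}$, of the numbers $\gamma_{\mathbf e}\prod_j(j+1)^{e_j}$ (with $\gamma_{\mathbf e}$ the coefficient) --- a finite computation governed by which $\gamma_{\mathbf e}$ are odd, equivalently by which monomials survive in $E_{n-1}^{2}$, forcing all $e_j$ to be even --- while $\mathrm{cont}(B_{n,0})$, a power of $2$ with possibly negative exponent, is read off the explicit product formula for $B_{n,0}$. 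Combining the two should give exactly $2^{4\floor{(n-1)/2}}$, and the same computation shows no odd prime divides $c_{n,0}$. For $k>0$ I would instead bound $\nu_p(c_{n,k})$ from above by pinning down one explicit coefficient of $DD_{n,k}$: feed the $a_k$-leading coefficient of $D_n$ and its next few coefficients (Proposition~\ref{prop:ldg}) through the Sylvester matrix defining $\disc_{a_k}$ and extract the coefficient of the resulting extreme monomial, an explicit expression in $n$ and $k$ whose $p$-valuation one checks does not exceed the conjectured value.

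\emph{The main obstacle.} The two genuinely hard points are the upper bounds for general $k$ and part~(c). Without the factorization $DD_{n,k}=c_{n,k}A_{n,k}^{3}B_{n,k}^{2}$ of Conjecture~\ref{conj:DDnk_fzn_intro} --- together with usable descriptions of $A_{n,k}$ and $B_{n,k}$ --- the upper bound requires ruling out every extra factor of $p$ by hand, i.e.\ exhibiting integer points on which $DD_{n,k}$ is $p$-minimal uniformly in $n$ and $k$, which I do not currently know how to produce. Part~(c), that $\nu_p(c_{n,k})$ is always a multiple of $2p$, looks subtlest of all: the lower-bound method yields divisibility by $p$ several times over, but the clean periodicity $2p$ really demands that the $p$-primary part of $DD_{n,k}$ be, up to a $p$-adic unit, a genuine $(2p)$-th power in $\ZZ_p[a_i]$; the only heuristic I have is that this resembles a wild-ramification (``conductor'') contribution --- a factor $2$ from the ubiquitous squares and a factor $p$ from the residue characteristic --- and making it precise seems to require structural input not yet available. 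I expect part~(c) to be the main obstacle, with the general-$k$ upper bounds a close second.
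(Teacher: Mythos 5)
The statement you set out to prove is Conjecture~\ref{conj:c_full}, which the paper does \emph{not} prove: it is an open conjecture, supported by the numerical data in Table~\ref{tab:c}. The only piece the paper actually establishes is Proposition~\ref{prop:2^n}, that $2^{n-1}\mid c_{n,0}$ and $2^{n}\mid c_{n,k}$ for $1\le k\le n/2$. Your ``lower bounds'' paragraph essentially rederives this proposition by the same route the paper takes: the paper also appeals to Stickelberger's theorem (via \cite[Theorem 4.2]{ABV23}, giving $D_n\equiv P^2\pmod 4$, hence $\partial_{a_k}D_n$ even), pulls $2^{\deg_{a_k}D_n}$ out of the resultant $\Res_{a_k}(D_n,\partial_{a_k}D_n)$, and reads the degree from Proposition~\ref{prop:ldg}. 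So the one part of your plan that is actually carried out is the same argument as Proposition~\ref{prop:2^n}, phrased with ``$D_n\bmod 2$ is a square $E_n^2$'' rather than with the discriminant Pfaffian.

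Beyond that, your proposal is a research sketch rather than a proof, and there is no proof in the paper to compare it to. A few concrete cautions. (i) For the upper bound at $k>0$ and the odd-prime lower bound you invoke the factorization $DD_{n,k}=c_{n,k}A_{n,k}^3B_{n,k}^2$ of Conjecture~\ref{conj:DDnk_fzn_intro}, which is itself conjectural except at $k=0$ (Theorem~\ref{thm:DDn0_fzn}); that dependency is circular as a proof strategy. (ii) Even at $k=0$, turning Theorem~\ref{thm:DDn0_fzn} into an exact evaluation $\nu_2(c_{n,0})=4\lfloor(n-1)/2\rfloor$ requires computing the content of $B_{n,0}$, whose explicit formula involves division by $(s_i-s_j)^3$ and a power of $a_n$, and whose $2$-adic content is not simply ``read off''; no such computation is given. (iii) Your proposed refinement to the sharp exponent via iterated factoring of $2$'s from the residual resultant, and your account of degeneration mod $p\mid\gcd(n,k)$, are heuristics: nothing rules out extra powers of $2$, and nothing produces the clean $2p$-periodicity of part~(c). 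You flag (ii) and (iii) yourself as the main obstacles, and I agree --- but the honest summary is that your plan, like the paper, proves only the $2^{n-1}$/$2^{n}$ divisibility, and the remainder of Conjecture~\ref{conj:c_full} is open for you and the authors alike.
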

\end{samepage}

\begin{remark}
Note that no prime $p > n$ can divide $DD_{n,k}$ as can be seen via Lemma \ref{lem:disc-zero} and Corollary \ref{cor:DDnk F0}.   
\end{remark}

We have shown modest progress toward Conjecture \ref{conj:c_full}.

\begin{prop} \label{prop:2^n}
    $2^{n-1}\mid DD_{n,k}$. If $1 \leq k \leq n/2$, then $2^n\mid DD_{n,k}$.
\end{prop}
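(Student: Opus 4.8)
Throughout I write $\nu_2$ for the $2$-adic valuation of the content (the GCD of the coefficients) of an integer polynomial.

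The plan is to exploit the classical fact that the discriminant, reduced modulo $2$, is a perfect square. By \eqref{eq:disc_from_roots}, $D_n=a_n^{2n-2}\prod_{i<j}(r_i-r_j)^2$, and in characteristic $2$ the product $a_n^{n-1}\prod_{i<j}(r_i-r_j)$ is $S_n$-invariant (every sign is trivial), so it lies in $\FF_2(a_0,\ldots,a_n)$; since its square $D_n\bmod 2$ is a polynomial and $\FF_2[a_0,\ldots,a_n]$ is integrally closed, there is $P\in\FF_2[a_0,\ldots,a_n]$ with $D_n\equiv P^2\pmod 2$. Hence every monomial of $D_n\bmod 2$ has all exponents even; writing $D_n=\sum_j\gamma_j a_k^j$ with $\gamma_j\in\ZZ[a_0,\ldots,\widehat{a_k},\ldots,a_n]$, this forces $2\mid\gamma_j$ for every odd $j$, and therefore $\partial_{a_k}D_n=\sum_j j\gamma_j a_k^{j-1}\equiv 0\pmod 2$.

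I would then feed this into $DD_{n,k}=\pm\,\Res_{a_k}(D_n,\partial_{a_k}D_n)/\ell$, where $\ell$ is the leading coefficient of $D_n$ in $a_k$ and $m:=\deg_{a_k}D_n$ (so $m=n-1$ for $k\in\{0,n\}$ and $m=n$ otherwise, by Proposition \ref{prop:ldg}). In the $(2m-1)\times(2m-1)$ Sylvester matrix of this resultant, $m$ of the rows are shifts of the coefficient vector of $\partial_{a_k}D_n$, hence divisible by $2$; so $2^m$ divides the resultant, and by multiplicativity of content $\nu_2(DD_{n,k})\ge m-\nu_2(\ell)$. For $k=0$ one has $\ell=\pm n^n a_n^{n-1}$: when $n$ is odd its content $n^n$ is odd and $2^{n-1}\mid DD_{n,0}$ drops out immediately, while when $n$ is even I would instead invoke \eqref{eq:D_from_s_i}, which shows that $n^n$ divides every coefficient $\gamma_j$ of $D_n$ in $a_0$, hence divides every one of the $2n-3$ rows of the Sylvester matrix; the quotient then acquires an extra factor $n^{n(2n-4)}$, and $\nu_2\!\big(n^{n(2n-4)}\big)=n(2n-4)\,\nu_2(n)\ge n-1$ for $n\ge 3$.

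For $1\le k\le n/2$, Proposition \ref{prop:ldg} gives $\ell=c\,a_n^{n-k-1}a_0^{k-1}$ for an integer $c$ which may well be even (one can compute $c=\pm(n-k)^{n-k}k^k$ from the trinomial discriminant of $a_nx^n+a_kx^k+a_0$), so dividing by $\ell$ could destroy the factors of $2$ found above; instead I would cancel $\ell$ before reducing mod $2$. The first column of the Sylvester matrix is $\ell$ times the integer vector with a $1$ in the first position and an $n$ in the $n$-th, so $DD_{n,k}=\pm\det S'$, where $S'$ is the Sylvester matrix with that column replaced by this integer vector. Expanding along the first column, $DD_{n,k}=\pm\det(S'_{\widehat 1,\widehat 1})\pm n\det(S'_{\widehat n,\widehat 1})$; the minor $S'_{\widehat 1,\widehat 1}$ still has $n$ rows all of whose entries are even (the rows built from $\partial_{a_k}D_n$, the first with its leading entry removed), so $2^n\mid\det S'_{\widehat 1,\widehat 1}$, whereas $S'_{\widehat n,\widehat 1}$ has $n-1$ such rows, so $2^{n-1}\mid\det S'_{\widehat n,\widehat 1}$. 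Therefore $\nu_2(DD_{n,k})\ge\min(n,\,\nu_2(n)+n-1)$, which together with the previous paragraph establishes $2^{n-1}\mid DD_{n,k}$ in every case and $2^n\mid DD_{n,k}$ when $1\le k\le n/2$ and $n$ is even.

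The remaining case, $1\le k\le n/2$ with $n$ odd, is where I expect the real work to be. Then $m=n$ is odd, so $\ell$ is itself even, and the parity of the coefficients is rigid ($\gamma_j$ is even exactly when $j$ is odd), which splits the $D_n$-rows of $S'$ into two complementary column-parity classes. The task is to turn this rigidity into one more power of $2$ in $\det(S'_{\widehat n,\widehat 1})$ — equivalently, to show $\det S'\equiv 0\pmod{2^n}$ — which amounts to a rank computation for $S'$ modulo $4$ rather than modulo $2$, and this is the main obstacle to a uniform proof. A fallback for this case is to reduce to the already-settled $k=0$ computation by specialization, using Corollary \ref{cor:2_coefs_0} together with the symmetry $DD_{n,n-k}\leftrightarrow DD_{n,k}$ of Lemma \ref{lem:sym}.
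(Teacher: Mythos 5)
Your starting point---that $\partial_{a_k} D_n$ has all coefficients even, because $D_n \bmod 2$ is a square---is exactly the ingredient the paper uses (via the Stickelberger identity $D_n = P^2 + 4Q$ for the ``discriminant Pfaffian'' $P$). The paper then simply writes $\pm DD_{n,k} = \Res_{a_k}(D_n,\partial_{a_k}D_n)$ and pulls out $2^{\deg_{a_k}D_n}$; you rightly observe that the correct identity is $\pm\ell\,DD_{n,k} = \Res_{a_k}(D_n,\partial_{a_k}D_n)$, where $\ell$ is the leading coefficient of $D_n$ in $a_k$, so that when $\ell$ has even content the naive bound on $DD_{n,k}$ degrades. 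Your worry is well-placed: with $\ell = \pm n^n a_n^{n-1}$ for $k=0$ and $\ell = \pm(n-k)^{n-k}k^k\,a_n^{n-k-1}a_0^{k-1}$ for $0<k<n$, the content of $\ell$ is indeed even for $k=0$ with $n$ even, and for most $(n,k)$ with $1 \le k < n$. So the identification $DD = \pm\Res$ in the paper's displayed chain is at best shorthand, and your Sylvester-minor bookkeeping is a more careful way to handle it.

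However, your repairs do not close the gap. The crucial step in your $k=0$, $n$ even case---that \eqref{eq:D_from_s_i} ``shows that $n^n$ divides every coefficient $\gamma_j$ of $D_n$ in $a_0$''---is false. The identity \eqref{eq:D_from_s_i} holds in $\QQ[a_n,s_1,\dots,s_{n-1}]$, and re-expressing $\prod_i(a_0+f_0(s_i))$ in terms of $a_1,\dots,a_n$ via $r a_r = (-1)^{n-r} n a_n\, e_{n-r}(s)$ introduces denominators of $n a_n$ that cancel the $n^n a_n^{n-1}$ out front. Concretely, already for $n=3$ the coefficient of $a_0^1$ in $D_3$ is $\gamma_1 = 18a_3a_2a_1-4a_2^3$, which is not divisible by $n^n=27$; one can cook up the analogous failure for even $n$ (e.g.\ the $a_1^n a_n^{n-2}$ term in $\gamma_0$, permitted by the weights of Proposition \ref{prop:D_bihom}, has a small coefficient). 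So the $k=0$, $n$ even case is not actually settled by your argument. Similarly, the case $1 \le k \le n/2$ with $n$ odd---which you explicitly flag as the hard part, getting only $2^{n-1}$ from your minor expansion---remains open, and the proposed ``fallback'' cannot work as stated: specializing coefficients (as in Corollary \ref{cor:2_coefs_0}) can only \emph{increase} the $2$-adic content of a polynomial, so it provides no lower bound on the content of the unspecialized $DD_{n,k}$, and Lemma \ref{lem:sym} relates $DD_{n,k}$ to $DD_{n,n-k}$, not to $DD_{n,0}$. In short: you have correctly located a real subtlety (one that the paper's own write-up passes over), but the argument you give does not yet prove the proposition in the cases where $\ct(\ell)$ is even.
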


\begin{proof}
  By a suitable form of Stickelberger's theorem on discriminants, there exist polynomials $P, Q \in \ZZ[a_0,\ldots,a_n]$ such that $D_n^2 = P^2 + 4Q$. (We may take $P$ to be the ``discriminant Pfaffian'': see \cite[Theorem 4.2]{ABV23}). Now
  \begin{align*}
    \pm DD_{n,k} &= \Res\left(D_n, \frac{\partial}{\partial a_k}D_n\right) \\
    &= \Res\left(D_n, 2 P \frac{\partial}{\partial a_k} P + 4 \frac{\partial}{\partial a_k} Q \right) \\
    &= 2^{\deg_{a_k} D_n} \Res\left(D_n, P \frac{\partial}{\partial a_k} P + 2 \frac{\partial}{\partial a_k} Q \right).
  \end{align*}
  Substituting the value of $\deg_{a_k} D_n$ from Proposition \ref{prop:ldg} yields the result.
\end{proof}

\bibliography{DD_custom}
\bibliographystyle{alpha}
\end{document}